\DeclareMathOperator{\id}{id}
\title{% Please, capitalize only the first word
    Combinatorial twists in $\mathfrak{gl}_n$ Yangians
    }
\author{% Please, use "Firstname Lastname" format, without abreviations
    Anastasia Doikou
    }
\abstract{%
    We introduce the special set-theoretic Yang-Baxter algebra and show that it is a Hopf algebra subject to certain conditions. The associated universal ${\cal R}$-matrix is also obtained via an admissible Drinfel'd twist. The structure of braces emerges naturally in this context by requiring the special set-theoretic Yang-Baxter algebra to be a Hopf algebra and a quasi-triangular bialgebra after twisting. The fundamental representation of the universal ${\cal R}$-matrix yields the familiar set-theoretic (combinatorial) solutions of the Yang-Baxter equation. We then apply the same Drinfel'd twist to the $\mathfrak{gl}_n$ Yangian after introducing the {\it augmented Yangian}. We show that the augmented Yangian is also a Hopf algebra and we also obtain its twisted version.
    }
\keywords{% 2-5 keywords
    Yang-Baxter equation, set-theoretic solutions, combinatorial Drinfel'd twists, Hopf algebras, Yangians.
    }
\begin{document}

\section{Introduction} 

The main aim of this study is the use of certain universal Drinfel'd twists \cite{Drinfeld, Drinfeld2} in the context of $\mathfrak{gl}_n$ Yangians ${\cal Y}(\mathfrak{gl}_n).$  We focus on universal twists that are combinatorial matrices in the fundamental representation \cite{Doikoutw, DoRySt} and generate combinatorial (set-theoretic) solutions of the Yang-Baxter equation (see, for instance, \cite{Dr92, Eti, Sol, GatMaj, Gat1, Gat2, JeOk, Rump1, Rump2}). In this manuscript the solutions of the Yang-Baxter equation are expressed as $n^2 \times n^2$ matrices \cite{DoiSmo1, DoiSmo2}.  In this spirit, when we say combinatorial solutions we mean that the matrices that represent the solutions of the Yang-Baxter equation are combinatorial, i.e. they have only one nonzero element, which takes the value 1, in every row and column.
We consider the linearized version of the set-theoretic Yang-Baxter equation and derive 
the quasi-triangular bialgebras associated to set-theoretic solutions. By identifying a suitable admissible Drinfel’d twist we are able to extract the
general set-theoretic universal ${\cal R}$-matrix, which is an element of ${\cal A} \otimes {\cal A}$ and ${\cal A}$ is the underlying bialgebra. 

More specifically, 
in Section \ref{2} we introduce the special set-theoretic Yang-Baxter algebra and show that it is a Hopf algebra. The associated universal ${\cal R}$-matrix is also obtained via an admissible Drinfel'd twist, making the special set-theoretic Yang-Baxter algebra a quasi-triangular bialgebra. The fundamental representation of the universal ${\cal R}$-matrix gives typical set-theoretic (combinatorial) solutions of the Yang-Baxter equation.
Here we only obtain reversible universal ${\cal R}$ matrices, i.e. ${\cal R}_{12}{\cal R}_{21} =1_{\cal A \otimes {\cal A}}$ (and their representations) as opposed to the general scenario discussed in \cite{DoRySt}, where rack type solutions of the Yang-Baxter equation and their Drinfel'd twists were discussed. The key novel outcomes of Section \ref{2} are summarized in Theorems \ref{basica2b}, \ref{corf0} and \ref{twist2b}, where we show that the algebraic structure of (skew) braces (Definition \ref{defbrace}) \cite{Rump1, Rump2, GV}, emerge naturally if the special set-theoretic Yang-Baxter algebra is required to be a Hopf algebra and a quasi-triangular bialgebra after twisting. In fact, it turns out that the twisted Hopf algebra is a quasi-triangular Hopf algebra (Theorem \ref{twist2b}). 
The more general new results of the present investigation are presented in Section \ref{3}, where we extend our analysis to the $\mathfrak{gl}_n$ Yangian and to parametric solutions of the Yang-Baxter equation. Specifically, we introduce the augmented $\mathfrak{gl}_n$ Yangian, we show in Theorem \ref{bob} that it is Hopf algebra and using the set-theoretic Drinfel'd twist we are able to obtain its twisted version. We basically extend the results of \cite{Doikoutw, DoGhVl}, where only fundamental representations of the augmented Yangian and the twisted $R$-matrix were presented. 

Before we continue with our analysis and the presentation of the main results, we recall the basic definitions of Hopf and quasi-triangular Hopf algebras, which will be used later in our analysis.

We first recall the definition of the Hopf algebra (see, for instance, \cite{Chari, Majid})
\begin{definition} \label{hopf}
A Hopf algebra $({\cal A}, \Delta, \epsilon, s)$ is a unital, associative algebra ${\cal A}$ over some field $k$ equipped  with the following linear maps: 
\begin{itemize}
\item multiplication, $m: {\cal A} \times {\cal A} \to {\cal A},$ $m(a,b) = ab,$ 
which is associative $(ab)c = a (bc)$ for all $a,b,c \in {\cal A}$
\item $\eta: k \to {\cal A},$ such that it produces the unit element for ${\cal A},$ $\eta(1) = 1_{\cal A}.$
\item co-product, $\Delta: {\cal A} \to {\cal A} \otimes {\cal A},$ $\Delta(a) = \sum_j \alpha_j \otimes \beta_j,$ which is coassociative, \[(\id \otimes \Delta)\Delta(a) = (\Delta \otimes \id)\Delta(a), \quad \text{ for all } a\in {\cal A}.\]
\item co-unit, $\epsilon: {\cal A} \to k,$ such that $(\epsilon \otimes \id)\Delta(a) =  (\id \otimes \epsilon)\Delta(a)= a,$ for all $a \in {\cal A}.$ 
\item antipode, $s: {\cal A} \to {\cal A},$ (bijective map) such that \[ m (s \otimes \id)\Delta(a) = m(\id \otimes s) \Delta(a) = \epsilon(a)  1_{\cal A}, \quad \text{for all } a \in {\cal A}.\]
\item $\Delta, \epsilon$ are algebra homomorphisms and ${\cal A} \otimes {\cal A}$ has the structure of a tensor product algebra: $(a \otimes b)(c \otimes d) = ac\otimes bd,$ for all $a,b,c,d \in {\cal A}$.
\end{itemize}
\end{definition}
If we do not require the existence of an antipode then $({\cal A}, \Delta, \epsilon)$ is called a {\it bialgebra}.

We also recall the definition of a quasi-triangular Hopf algebra \cite{Drinfeld, Drinfeld2}.
\begin{definition} \label{quasi}
Let ${\cal A}$ be a Hopf algebra over some field $k$, then ${\cal A}$ is a quasi-triangular Hopf algebra if there exists an invertible element ${\cal R} \in {\cal A}\otimes {\cal A}$ (universal ${\cal R}$-matrix), such that:
\begin{enumerate}
    \item  ${\cal R} \Delta(a) =\Delta^{op}(a) {\cal R},$ for all $a \in {\cal A},$ 
    where $\Delta : {\cal A} \to {\cal A} \otimes {\cal A}$ is the co-product on ${\cal A}$ and $\Delta^{op}(a) = \pi \circ \Delta(a),$ $\pi : {\cal A} \otimes {\cal A} \to {\cal A} \otimes {\cal A},$ such that $\pi (a \otimes b) = b \otimes a.$ 
    \item $(\id \otimes \Delta){\cal R} = {\cal R}_{13} {\cal R}_{12}$, and $(\Delta \otimes \id){\cal R} = {\cal R}_{13} {\cal R}_{23}.$
\end{enumerate}
\end{definition}
Also, the following statements hold:
\begin{itemize}
\item The antipode 
$s: {\cal A} \to {\cal A}$ 
satisfies $(\id \otimes s){\cal R}^{-1} = {\cal R},$ 
$(s \otimes \id){\cal R} ={\cal R}^{-1}.$
\item The co-unit $\epsilon: {\cal A} \to k$ satisfies $(\id \otimes \epsilon) {\cal R} = (\epsilon \otimes \id){\cal R} = 1_{\cal A}.$
\item Due to Definition \ref{quasi} the universal ${\cal R}$-matrix satisfies the Yang-Baxter equation  
\begin{equation}
{\cal R}_{12} {\cal R}_{13} {\cal R}_{23} ={\cal R}_{23} {\cal R}_{13} {\cal R}_{12}. \label{UYBE}
\end{equation}
We recall the index notation: let
${\cal R} = \sum_j a_j \otimes b_j,$ then ${\cal R}_{12} = \sum_j a_j \otimes b_j \otimes 1_{\cal A}$, ${\cal R}_{23} = \sum_j 1_{\cal A} \otimes a_j \otimes b_j$ and ${\cal R}_{13} = \sum_j a_j \otimes 1_{\cal A} \otimes  b_j.$\end{itemize}
Proofs of the above statements can be found, for instance, in \cite{Chari, Majid}.

\begin{remark} \label{rem00}
Consider a representation $\rho_{\lambda}: {\cal A} \to \mbox{End}({\mathbb C}^n),$ $\lambda \in {\mathbb C},$ such that
\[(\rho_{\lambda} \otimes \id){\cal R} =: L(\lambda) \in \mbox{End}({\mathbb C}^n) \otimes {\cal A},\] and
$(\rho_{\lambda_1} \otimes \rho_{\lambda_2}){\cal R} =: R(\lambda_1, \lambda_2)\in \mbox{End}({\mathbb C}^n) \otimes \mbox{End}({\mathbb C}^n),$ $\lambda_{1,2} \in {\mathbb C}.$ Then the Yang-Baxter equation (\ref{UYBE}) reduces to (we suppress the index $3$ in the following equation)
\begin{equation}
R_{12}(\lambda_1, \lambda_2)L_1(\lambda_1) L_2(\lambda_2) =  L_2(\lambda_2)L_1(\lambda_1) R_{12}(\lambda_1, \lambda_2) \label{rll}
\end{equation}
after acting with $(\rho_{\lambda_1} \otimes \rho_{\lambda_2} \otimes \id)$ on (\ref{UYBE}).
Moreover, (\ref{UYBE}) turns into the Yang-Baxter equation on $({\mathbb C}^n)^{\otimes 3},$ 
\begin{equation}
R_{12}(\lambda_1, \lambda_2)R_{13}(\lambda_1, \lambda_3) R_{23}(\lambda_2,\lambda_3) =  R_{23}(\lambda_2,\lambda_3)R_{13}(\lambda_1,\lambda_3) R_{12}(\lambda_1, \lambda_2)
\nonumber
\end{equation}
after acting with $(\rho_{\lambda_1} \otimes \rho_{\lambda_2} \otimes \rho_{\lambda_3}).$ An equation similar to (\ref{rll}) holds for 
\[\hat L(\lambda) := (\id \otimes \rho_{\lambda}) {\cal R} \in {\cal A} \otimes \mbox{End}({\mathbb C}^n),\] and a mixed equation for both $L,\ \hat L$ also follows from (\ref{UYBE}). %we obtain (we suppress indices 1 and 2 respectively in the equations below).\\
%$\hat L_2(\lambda_2)\hat L_3(\lambda_3) R_{23}(\lambda_2, \lambda_3) =   R_{23}(\lambda_2, \lambda_3) \hat L_3(\lambda_2)\hat %L_2(\lambda_2)$ and\\
%$L_1(\lambda_1)R_{13}(\lambda_1, \lambda_3) \hat L_{3}(\lambda_3) =  \hat  L_{3}(\lambda_3) R_{13}(\lambda_1, %\lambda_3)L_2(\lambda_2).$
\end{remark}

\section{Set-theoretic Hopf algebras} \label{2}

In this section, we introduce the {\it special set-theoretic Yang-Baxter algebra} (or special set-theoretic YB algebra for the sake of brevity) and show that it is a Hopf algebra. Then by introducing a suitable Drinfel'd twist we derive the universal ${\cal R}$-matrix associated to the special set-theoretic YB algebra (see a general analysis and definitions in \cite{Doikoutw, DoRySt}, see also \cite{Sol, LebVen}). 
We also show that the special set-theoretic Hopf algebra becomes a quasi-triangular bialgebra after twisting, subject to certain conditions that naturally lead to the structure of (skew) braces.

\subsection{Special set-theoretic YB algebra as a Hopf algebra}
We first define the basic set-theoretic YB algebra as follows (see also \cite{DoRySt}).
\begin{definition}  
\label{setalgd1} 
Let $X$ be a non-empty set, and for all $a,b\in X,$ $\sigma_a, \ \tau_b: X\to X.$  
We say that the unital, associative algebra ${\cal A}$ over $k,$
generated by indeterminates $1_{{\cal A}}$ (unit element), $h_a,$
$w_a, w^{-1}_a,$ for $a \in X,$ 
and relations for all $a,b \in X:$
\begin{eqnarray}
 h_a  h_b =\delta_{a,b} h_a, ~~ w_a^{-1} w_a =w_aw_a^{-1} =1_{ {\cal A}}, ~~  w_a w_b = 
w_{\sigma_a(b)} w_{\tau_{b}(a)} ~~  w_a h_b = h_{\sigma_a(b)} w_a,  \label{qualgbb}
\end{eqnarray}
is a {\it basic set-theoretic YB algebra}.
\end{definition}

\begin{proposition} 
\label{qua2} 
Let ${\cal A}$ be the basic set-theoretic YB algebra
then 
\begin{equation}
h_{\sigma_{\sigma_{a}(b)}(\sigma_{\tau_{b}(a)}(c))} = h_{\sigma_a(\sigma_b(c))}.  
\label{basiko0} \nonumber
\end{equation}
If, in addition, for all $a,b \in X,$ $h_a = h_b \Rightarrow a =b,$ then for all $a,b,c \in X,$
\begin{eqnarray}
&&  \sigma_{a}(\sigma_b(c))= \sigma_{\sigma_{a}(b)}(\sigma_{\tau_{b}(a)}(c)) \label{basic00}
%&& w_{\tau_{\sigma_{\tau_b(a)}(c)}(\sigma_{a}(b))}
%w_{\tau_c(\tau_b(a))}=w_{\sigma_{\tau_{\sigma_b(c)}(a)}(\tau_c(b))} w_{\tau_{\tau_c(b)}(\tau_{\sigma_b(c)}(a))}.  \nonumber
%&& \tau_c(\tau_b(a)) = \tau_{\tau_{c}(b)}(\tau_{\sigma_{b}(c)}(a)), \label{C2}\\
%&&  \tau_{\sigma_{\tau_b(a)}(c)}(\sigma_{a}(b))= \sigma_{\tau_{\sigma_b(c)}(a)}(\tau_{c}(b)). \label{C3}
\end{eqnarray}
and $\sigma_a: X \to X$ is an injection.
\end{proposition}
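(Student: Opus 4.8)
The plan is to exploit associativity of the product of the $w$'s together with the covariance relation $w_a h_b = h_{\sigma_a(b)} w_a$ from (\ref{qualgbb}). The central computation is to evaluate the single element $w_a w_b h_c \in {\cal A}$ in two different ways and compare. First I would move $h_c$ to the left through $w_b$ and then through $w_a$ by applying the covariance relation twice,
\[
w_a w_b h_c = w_a h_{\sigma_b(c)} w_b = h_{\sigma_a(\sigma_b(c))}\, w_a w_b.
\]
Second, I would first rewrite the pair $w_a w_b$ using the braiding relation $w_a w_b = w_{\sigma_a(b)} w_{\tau_b(a)}$, and then again push $h_c$ to the left through the two $w$'s,
\[
w_a w_b h_c = w_{\sigma_a(b)} w_{\tau_b(a)} h_c = h_{\sigma_{\sigma_a(b)}(\sigma_{\tau_b(a)}(c))}\, w_{\sigma_a(b)} w_{\tau_b(a)} = h_{\sigma_{\sigma_a(b)}(\sigma_{\tau_b(a)}(c))}\, w_a w_b,
\]
where in the last step I reuse the braiding relation in reverse. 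Equating the two expressions and multiplying on the right by the invertible element $(w_a w_b)^{-1} = w_b^{-1} w_a^{-1}$ cancels the trailing $w$'s and yields $h_{\sigma_{\sigma_a(b)}(\sigma_{\tau_b(a)}(c))} = h_{\sigma_a(\sigma_b(c))}$, the first claimed identity.

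For the second statement, the added hypothesis that $a \mapsto h_a$ is injective (i.e.\ $h_a = h_b \Rightarrow a = b$) lets me read off (\ref{basic00}) immediately from the identity just established, since equal $h$-generators now force equal indices. To see that each $\sigma_a$ is injective, I would suppose $\sigma_a(b) = \sigma_a(b')$, so that $h_{\sigma_a(b)} = h_{\sigma_a(b')}$; rewriting both sides via covariance as $w_a h_b w_a^{-1}$ and $w_a h_{b'} w_a^{-1}$ respectively and then conjugating by $w_a^{-1}$ gives $h_b = h_{b'}$, whence $b = b'$ by injectivity of $a \mapsto h_a$.

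I expect no serious obstacle in the computation itself; the two-way evaluation of $w_a w_b h_c$ is forced and short. The one point genuinely requiring care is conceptual rather than technical: the first identity is merely an equality of elements of ${\cal A}$, and it is only the injectivity hypothesis that upgrades it to the set-theoretic self-distributivity relation (\ref{basic00}) and to the injectivity of the maps $\sigma_a$. Without that hypothesis the indices cannot be separated, so I would be careful to flag precisely where it is invoked.
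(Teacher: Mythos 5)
Your proof is correct and follows essentially the same route as the paper: the paper likewise evaluates $w_a w_b h_c$ in two ways (via associativity, the relations (\ref{qualgbb}) and invertibility of the $w$'s) to get the equality of $h$-generators, then invokes the hypothesis $h_a=h_b\Rightarrow a=b$ to separate indices, and proves injectivity of $\sigma_a$ by exactly your cancellation argument with $w_a$. Your write-up just makes the two-way computation and the role of the injectivity hypothesis more explicit than the paper's terse version.
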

\begin{proof}
We compute $w_a w_b h_c$ using the associativity of the algebra, relations (\ref{qualgbb}) and the invertibility of $w_a,$ for all $a\in X$ we conclude for all $a,b,c \in X$ (see also \cite{DoRySt}) \begin{equation}
h_{\sigma_{\sigma_{a}(b)}(\sigma_{\tau_{b}(a)}(c))} = h_{\sigma_a(\sigma_b(c))}\  
\Rightarrow \ \sigma_{\sigma_{a}(b)}(\sigma_{\tau_{b}(a)}(c))= \sigma_a(\sigma_b(c)).  
\label{basiko} \nonumber
\end{equation}
If in addition for all $a,b \in X,$ $h_a = h_b \Rightarrow a =b,$ then for all $a,b,c \in X,$ (\ref{basic00}) holds.

Also, assume that $\sigma_a(b) = \sigma_a(c),$ then $h_{\sigma_a(b)}w_a = h_{\sigma_a(c)}w_a$ and by the last relation in (\ref{qualgbb}) we obtain
$w_a h_b= w_ah_c,$ which due to the invertibility of $w_a$ leads to $h_b=h_c$ and hence $b=c.$
\end{proof}
\begin{definition} The basic set-theoretic YB algebra ${\cal A}$ is called a special set-theoretic Yang-Baxter algebra if 
for all $a \in X,$ $\sigma_a:X \to X$ is a bijection. %$h_a = h_b \Rightarrow a =b$ and  
\end{definition}
Notice that the element $c= \sum_{a\in X} h_a,$ is a central element of the special set-theoretic YB
algebra ${\cal A}$. This can be immediately shown by means of the definition of the algebra ${\cal A}.$ 
We consider henceforth, without loss of generality, $c=1_{{\cal A}}$. 

\begin{remark} \label{rem0} Throughout this manuscript we will be considering the following notation.
Let $X = \{x_1, x_2, \ldots, x_n\}$ and consider the vector space $V= \mathbb{C}X$ of dimension equal to the cardinality of $X.$ Also ${\mathbb B} = \{e_x\},~x\in X$ is the standard canonical basis of the $n$-dimensional vector space ${\mathbb C}^n,$ that is $e_{x_j}$ is the $n$-dimensional column vector with 1 in the $j^{th}$ row and zeros elsewhere. 
Let also ${\mathbb B}^*= \{e_x^T\},~x\in X$ ( $^T$ denotes transposition) be the dual basis: $e_x^T e_y= \delta_{x,y},$ also $e_{x,y} := e_x  e_y^T$ ($n \times n$ matrices), $x,y \in X$ and they form a basis of $\mbox{End}({\mathbb C}^n).$ That is, to each finite set $X$ we associate a vector space of dimension equal to the cardinality of $X,$ so that each element of the set is represented by a vector of the basis and each map within the set is represented as an $n  \times n$ matrix.
\end{remark}

\begin{remark}[\bf Fundamental representation of the special set-theoretic YB algebra:] \label{remfu2b}  
Let ${\cal A}$ be the special set-theoretic YB algebra and $\rho:  {\cal A} \to \mbox{End}({\mathbb C}^n),$ such that
\begin{equation}
 h_a\mapsto e_{a,a}, \quad  
w_a \mapsto \sum_{b \in X} e_{\sigma_a(b),b}.\label{repbb1}
\end{equation}
Indeed, it can be verified that the above represented elements satisfy the algebraic relations of the special set-theoretic YB algebra (\ref{qualgbb}) if and only if $\sigma_a(\sigma_b(c))= \sigma_{\sigma_{a}(b)}(\sigma_{\tau_{b}(a)}(c))$ for all $a,b,c \in X$.
\end{remark}

\begin{theorem}[Hopf algebra]
 \label{basica2b} 
Let ${\cal A}$ 
be the special set-theoretic YB algebra and \[+: X \times X \to X, \quad (a,b)\mapsto a +b.\]
If $(X, +, 0)$ is a group and for all $a,b,x \in X,$ 
\begin{equation}
 \sigma_x(a) + \sigma_x(b) = \sigma_x(a + b), \label{condition0}
\end{equation} 
then, $({\cal A}, \Delta, \epsilon, s)$ is a Hopf algebra with:
\begin{enumerate}
    \item \textbf{Co-product:} $\Delta: {\cal A} \to {\cal A} \otimes {\cal A}$, \[\Delta(w_a^{\pm 1}) = 
w_a^{\pm 1}\otimes w_a^{\pm 1} \quad \text{and} \quad \Delta(h_a) = \sum_{b, c \in X} h_b \otimes h_c \big |_{b+c =a}.\]
\item \textbf{Co-unit:} $\epsilon: {\cal A} \to k,$  $~\epsilon(w_a^{\pm 1}) =1$ and $\epsilon(h_a) = \delta_{a,0}$
\item \textbf{Antipode:} $s: {\cal A} \to {\cal A},$ $~s(w_a^{\pm 1}) =w_a^{\mp 1}$ and $s(h_a) = h_{-a},$ where $-a$ is the inverse of $a\in X,$ in $(X,+).$ 
\end{enumerate} 
The opposite is also true, i.e. if $({\cal A}, \Delta, \epsilon, s)$ is a Hopf algebra with coproducts given above (part 1 of the Theorem), then $(X, +, 0)$ is a group and (\ref{condition0}) holds.
\end{theorem}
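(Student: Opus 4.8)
The plan is to verify directly the six axioms of \cref{hopf} for the proposed maps and then to reverse each verification to obtain the converse. Since $X=\{x_1,\dots,x_n\}$ is finite, the sum $\Delta(h_a)=\sum_{b+c=a}h_b\otimes h_c$ is finite and the maps are well defined on generators, so the only content is that they respect the relations (\ref{qualgbb}) and the coalgebra axioms.

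\textbf{Forward direction.} First I would check that $\Delta$ is an algebra homomorphism, i.e. that it preserves the four families of relations in (\ref{qualgbb}). The grouplike relations $w_a^{-1}w_a=w_aw_a^{-1}=1_{\cal A}$ and $w_aw_b=w_{\sigma_a(b)}w_{\tau_b(a)}$ are immediate from $\Delta(w_a^{\pm1})=w_a^{\pm1}\otimes w_a^{\pm1}$, since $\Delta$ merely duplicates both sides. For $h_ah_b=\delta_{a,b}h_a$ one computes $\Delta(h_a)\Delta(h_b)=\sum h_{b_1}h_{b_2}\otimes h_{c_1}h_{c_2}$ over $b_1+c_1=a,\ b_2+c_2=b$, and the idempotent relation collapses this to $\delta_{a,b}\Delta(h_a)$. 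The decisive relation is the mixed one $w_ah_b=h_{\sigma_a(b)}w_a$: expanding $\Delta(w_a)\Delta(h_b)=\sum_{b_1+c_1=b}h_{\sigma_a(b_1)}w_a\otimes h_{\sigma_a(c_1)}w_a$ and $\Delta(h_{\sigma_a(b)})\Delta(w_a)=\sum_{b_2+c_2=\sigma_a(b)}h_{b_2}w_a\otimes h_{c_2}w_a$, then multiplying on the right by the invertible $w_a^{-1}\otimes w_a^{-1}$, the two sides agree precisely when $\sigma_a$ carries $\{(b_1,c_1):b_1+c_1=b\}$ onto $\{(b_2,c_2):b_2+c_2=\sigma_a(b)\}$; because $\sigma_a$ is a bijection this is exactly the homomorphism property $\sigma_a(b_1)+\sigma_a(c_1)=\sigma_a(b_1+c_1)$, i.e. condition (\ref{condition0}).

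\textbf{Forward direction, continued.} Coassociativity on $w_a$ is trivial, while on $h_a$ both $(\id\otimes\Delta)\Delta(h_a)$ and $(\Delta\otimes\id)\Delta(h_a)$ reduce to $\sum_{d+e+f=a}h_d\otimes h_e\otimes h_f$, the nested sums agreeing because $+$ is associative. For the co-unit I would check that $\epsilon$ is an algebra map (the only non-automatic point being $\epsilon(w_ah_b)=\epsilon(h_{\sigma_a(b)}w_a)$, which needs $\sigma_a(b)=0\iff b=0$; this holds since the automorphism $\sigma_a$ fixes $0$) and that $(\epsilon\otimes\id)\Delta(h_a)=\sum_{0+c=a}h_c=h_a$, using that $0$ is the identity. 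For the antipode, $w_a$ is handled by $s(w_a)w_a=w_a^{-1}w_a=1_{\cal A}=\epsilon(w_a)1_{\cal A}$, and for $h_a$ one gets $m(s\otimes\id)\Delta(h_a)=\sum_{b+c=a}h_{-b}h_c=\sum_{b:\,b+(-b)=a}h_{-b}$, which vanishes for $a\neq0$ and equals $\sum_{x}h_x=1_{\cal A}$ for $a=0$ (relabelling $x=-b$), matching $\epsilon(h_a)1_{\cal A}$; here I use $b+(-b)=0$ and the centrality $\sum_{a\in X}h_a=1_{\cal A}$.

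\textbf{Converse.} For the opposite implication I would read the same computations backwards, using the linear independence of the $h_x$ (and of the $h_xw_a\otimes h_yw_a$, obtained from the former by right multiplication by the invertible $w_a\otimes w_a$). Requiring $\Delta$ to be a homomorphism on $w_ah_b=h_{\sigma_a(b)}w_a$ forces, term by term, $\sigma_a(b_1)+\sigma_a(c_1)=\sigma_a(b_1+c_1)$, i.e. (\ref{condition0}). Coassociativity on $h_a$ forces $(d+e)+f=d+(e+f)$ whenever either equals $a$, i.e. associativity of $+$. The co-unit axiom $(\epsilon\otimes\id)\Delta(h_a)=h_a=(\id\otimes\epsilon)\Delta(h_a)$ forces $\{c:0+c=a\}=\{a\}=\{b:b+0=a\}$, so $0$ is a two-sided identity. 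Finally the antipode axiom at $a\neq0$ forces $\sum_{b:\,b+(-b)=a}h_{-b}=0$, hence (nonnegative integer coefficients on linearly independent $h_x$) $b+(-b)=0$ for every $b$, while the case $a=0$ forces $b\mapsto-b$ to be a bijection; associativity, a two-sided identity and right inverses then yield a group via the standard monoid argument, so $(X,+,0)$ is a group with inverse $-b$.

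\textbf{Main obstacle.} I expect the only genuinely delicate point to be the homomorphism check on the mixed relation $w_ah_b=h_{\sigma_a(b)}w_a$: this is where the split coproduct of $h$, the bijectivity of $\sigma_a$ and the additive law on $X$ interact, and it is exactly the step pinning down condition (\ref{condition0}) in both directions. Everything else is bookkeeping with the group axioms, provided one consistently invokes the linear independence of the $h_x$ (and of their translates by $w_a$) when extracting the group structure in the converse.
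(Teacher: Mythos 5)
Your forward direction is correct and is essentially the paper's own argument: the paper likewise verifies the Hopf axioms generator by generator, with the mixed relation $w_a h_b = h_{\sigma_a(b)} w_a$ being the one place where condition (\ref{condition0}) enters, and the antipode check on $h_a$ collapsing via $h_b h_c = \delta_{b,c} h_b$ and $\sum_{b\in X} h_b = 1_{\cal A}$. You are in fact more careful than the paper on two points it passes over silently: multiplicativity of $\epsilon$ (your observation that $\sigma_a$ fixes $0$, which follows from (\ref{condition0}) with $a=b=0$ and cancellation in $(X,+)$), and the preservation of $h_a h_b = \delta_{a,b} h_a$ under $\Delta$.

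The gap is in the converse, at the inverse-existence step. The converse hypothesis grants only the form of the coproduct; a counit and antipode are merely asserted to exist. Your extraction of associativity (from coassociativity) and of condition (\ref{condition0}) (from the homomorphism property on the mixed relation) uses only the coproduct and is sound, modulo linear independence of the $h_x$, which the paper also tacitly assumes. But when you write the antipode axiom as $\sum_{b:\, b+(-b)=a} h_{-b} = 0$, you have assumed that $s(h_b) = h_{-b}$ for some map $b \mapsto -b$ on $X$ --- and this form is not part of the hypothesis; it is what formula 3 of the theorem would supply, but uniqueness of the antipode identifies $s$ with that formula only after you know $(X,+,0)$ is a group, which is circular. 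The same issue arises, more mildly, with $\epsilon(h_b) = \delta_{b,0}$: this should first be derived from $\epsilon$ being an algebra map (the scalars $\epsilon(h_b)$ are orthogonal idempotents in $k$ summing to $1$, so exactly one of them equals $1$; the counit axiom then makes that index a two-sided identity, which you may call $0$). The antipode step can be repaired without ever determining the form of $s$: multiply $\sum_{b+c=a} s(h_b) h_c = \epsilon(h_a) 1_{\cal A}$ on the right by $h_d$ to get $\bigl(\sum_{b:\, b+d=a} s(h_b)\bigr) h_d = \delta_{a,0}\, h_d$; taking $a=0$, if no $b$ satisfies $b+d=0$ the left-hand side is an empty sum and $h_d = 0$, contradicting linear independence. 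Hence every $d$ has a left inverse, and a monoid in which every element has a left inverse is a group. With this patch (and the counit derivation) your converse is complete and realizes what the paper only gestures at when it says to "follow the logic of the proof backwards".
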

\begin{proof}  We first assume that $(X,+,0)$ is a group and (\ref{condition0}) holds.
To prove that $({\cal A}, \Delta, \epsilon, s)$ is a Hopf algebra, we show that all the axioms of Definition \ref{hopf} are satisfied (see also \cite{DoRySt} for a general proof).
\begin{itemize}
\item The coproduct $\Delta$ is an algebra homomorphism.  Indeed, the coproducts satisfy the algebraic relations (\ref{qualgbb}). Specifically, we show that $\Delta(w_a) \Delta(h_b)= \Delta(h_{\sigma_a(b)}) \Delta(w_a)$ for all $a,b \in X$, by using (\ref{condition0}).

\item The coproducts are coassociative: for all $a\in X,$ $w_a$ is a group-like element, so co-associativity obviously holds. For $h_a$ co-associativity holds, due to the associativity of $+,$ recall $(X, +)$ is a group. 

\item It immediately follows for the group-like elements, 
\[(\id \otimes \epsilon)\Delta(w_a^{\pm 1}) = (\epsilon \otimes \id)\Delta(w_a^{\pm 1}) = w_a^{\pm 1}.\]
Also,
$(\epsilon \otimes \id )\Delta(h_a) =\sum_{b,c \in X} \delta_{b,0} h_c\big |_{b+c =a} = h_a,$ similarly $(\id \otimes \epsilon)\Delta(h_a) = h_a,$ for all $a\in X.$

\item For the group-like elements, $m(s \otimes \id)\Delta(w_a^{\pm 1}) =m(\id \otimes s) \Delta(w_a^{\pm 1})= 1_{\cal A}.$
Moreover, \[m(s \otimes \id)\Delta(h_a) = \sum_{b, c\in X} h_{-b} h_c\big |_{b+c =a} = \delta_{a,0} 1_{\cal A} = \epsilon(h_a) 1_{\cal A},\] where we have used that $h_b h_c = \delta_{b,c} h_b$ and $\sum_{b\in X} h_b =1_{\cal A}.$ 
Similarly, \[m(\id \otimes s)\Delta(h_a) =\epsilon(h_a) 1_{\cal A}.\]
\end{itemize}
To prove the opposite, i.e. if $({\cal A}, \Delta, \epsilon, s)$ is a Hopf algebra then $({X,+,0})$ is a group and (\ref{condition0}) holds is straightforward; we follow the logic of the proof above backwards. We also recall that given the coproduct in a Hopf algebra the counit and antipode can be uniquely derived by the axioms of a Hopf algebra (see for instance \cite{Majid}).
\end{proof} 

\begin{remark} From the proof of Theorem \ref{basica2b} it follows: by requiring $({\cal A}, \Delta, \epsilon)$ to be a bialgebra with coproducts given by (2) in Theorem \ref{basica2b}, we conclude that $(X, +, 0)$ is a monoid and also $\sigma_x(a) + \sigma_x(b) = \sigma_x(a+b),$ for all $a,b,x \in X.$ If we further require $({\cal A}, \Delta ,\epsilon, s)$ to be a Hopf algebra then $(X, +, 0)$ is a group.

Note that whenever $(X,+)$ forms an abelian group, the corresponding Hopf algebra $({\cal A}, \Delta, \epsilon, s)$ is co-commutative, which means that the opposite coproduct coincides with the coproduct itself, i.e. $\Delta^{(op)}=\Delta$.\end{remark}

\subsection{Set-theoretic Drinfel'd twist}

In this subsection we introduce the set-theoretic (or combinatorial) Drinfel'd twist (see \cite{Doikoutw, DoGhVl, DoRySt} and a relevant construction in \cite{Sol}). Using the twist, we will be able to obtain the universal set-theoretic ${\cal R}$-matrix associated with the special set-theoretic YB algebra.

Before we introduce the set-theoretic twist, we recall a general statement \cite{Drinfeld}.
\begin{proposition}[Drinfel'd] \label{prot}  Let ${\cal A}$ be a unital, associative algebra, ${\cal F}, {\cal R} \in {\cal A} \otimes {\cal A}$ be invertible elements, ${\cal R}$ satisfy the Yang-Baxter equation (\ref{UYBE}), and ${\cal F}_{1,23}, {\cal F}_{12,3} \in {\cal A}^{\otimes 3}$ be such that
\begin{enumerate}
\item  ${\cal F}_{23}{\cal F}_{1,23} = {\cal F}_{12} {\cal F}_{12,3},$ where recall ${\cal F}_{12} = {\cal F} \otimes 1_{\cal A}$ and ${\cal F}_{23} = 1_{\cal A} \otimes {\cal F}.$ 
\item  ${\cal F}_{1,32}  {\cal R}_{23} = {\cal R}_{23} {\cal F}_{1,23}$ and ${\cal F}_{21,3} {\cal R}_{12} = {\cal R}_{12} {\cal F}_{12,3}.$
\end{enumerate}
 That is, ${\cal F}$ is an admissible Drinfel'd twist.
 Define also ${\cal R}^F := {\cal F}^{(op)} {\cal R} {\cal F}^{-1},$ ${\cal F}^{(op)} = \pi({\cal F})$ where $\pi: {\cal A} \otimes {\cal A} \to {\cal A} \otimes {\cal A}$ is the flip map. Then ${\cal R}^F$ also satisfies the Yang-Baxter equation.
\end{proposition}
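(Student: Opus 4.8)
The plan is to substitute the definition ${\cal R}^F = {\cal F}^{(op)}{\cal R}{\cal F}^{-1}$ directly into the three-fold Yang-Baxter equation for ${\cal R}^F$ and reduce it to the assumed Yang-Baxter equation (\ref{UYBE}) for ${\cal R}$. Working in ${\cal A}^{\otimes 3}$, I first record how the twisted matrix sits in each pair of legs: writing ${\cal F}_{ji}$ for ${\cal F}^{(op)}$ placed in legs $i,j$, one has ${\cal R}^F_{12} = {\cal F}_{21}{\cal R}_{12}{\cal F}_{12}^{-1}$, ${\cal R}^F_{23} = {\cal F}_{32}{\cal R}_{23}{\cal F}_{23}^{-1}$ and ${\cal R}^F_{13} = {\cal F}_{31}{\cal R}_{13}{\cal F}_{13}^{-1}$. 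Substituting these, both sides of the target identity ${\cal R}^F_{12}{\cal R}^F_{13}{\cal R}^F_{23} = {\cal R}^F_{23}{\cal R}^F_{13}{\cal R}^F_{12}$ become alternating words in the two-leg twists ${\cal F}^{\pm 1}_{ij}$ and the factors ${\cal R}_{ij}$, and the entire task is to collapse these two words onto the two sides of (\ref{UYBE}).

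The collapse proceeds in two kinds of moves, exactly mirroring the classical Drinfel'd twist computation. First I would use the cocycle relation, condition (1), ${\cal F}_{23}{\cal F}_{1,23} = {\cal F}_{12}{\cal F}_{12,3}$, together with the instances of it obtained by applying $\pi$ in a chosen pair of legs, to recombine each adjacent pair of two-leg twists into one of the composite three-leg twists ${\cal F}_{1,23}, {\cal F}_{12,3}$ or their flips ${\cal F}_{1,32}, {\cal F}_{21,3}$. Second, I would invoke the compatibility relations, condition (2), ${\cal F}_{1,32}{\cal R}_{23} = {\cal R}_{23}{\cal F}_{1,23}$ and ${\cal F}_{21,3}{\cal R}_{12} = {\cal R}_{12}{\cal F}_{12,3}$, to commute each ${\cal R}_{ij}$ past the composite twist standing next to it, the price being that the twist is replaced by its flip. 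Carrying out these moves on both words pushes all twist factors to the extreme left and right while leaving the three ${\cal R}_{ij}$ adjacent in the middle; the aim is to reach
\[{\cal R}^F_{12}{\cal R}^F_{13}{\cal R}^F_{23} = \Lambda\,({\cal R}_{12}{\cal R}_{13}{\cal R}_{23})\,\Xi^{-1}, \qquad {\cal R}^F_{23}{\cal R}^F_{13}{\cal R}^F_{12} = \Lambda\,({\cal R}_{23}{\cal R}_{13}{\cal R}_{12})\,\Xi^{-1},\]
with the \emph{same} flanking elements $\Lambda$ (a word in the flipped composite twists) and $\Xi$ (a word in the unflipped composite twists) on both sides. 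Once this is secured, the Yang-Baxter equation (\ref{UYBE}) for ${\cal R}$ equates the two bracketed middle factors, and cancelling $\Lambda$ on the left and $\Xi^{-1}$ on the right yields the claim.

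The delicate point, and the step I expect to be the main obstacle, is the leg- and flip-bookkeeping that guarantees $\Lambda$ and $\Xi$ really coincide on the two sides. Conditions (1) and (2) are stated only for one specific ordering of the legs, so each recombination and each commutation must be applied in precisely the instance, possibly after an appropriate flip $\pi$, in which the hypothesis is literally available; in particular one must verify that the flips introduced when commuting an ${\cal R}_{ij}$ through a composite twist are exactly compensated by the flips already carried by the ${\cal F}^{(op)}$-factors, so that the two independent reduction sequences terminate at the same $\Lambda$ and the same $\Xi$. All the genuine content of the proof lives in this matching; the individual recombination and commutation steps are routine once the correct instances are identified. (Equivalently, one may read conditions (1) and (2) as the twisted analogues of the hexagon-type relations of \cref{quasi} and invoke the implication ``quasi-triangularity $\Rightarrow$ (\ref{UYBE})'' already recorded above, but the leg-matching obstacle reappears in that formulation as well.)
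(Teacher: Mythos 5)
Your overall strategy --- expressing each ${\cal R}^F_{ij}$ through ${\cal R}_{ij}$ and the twists, and reducing the twisted Yang--Baxter equation to (\ref{UYBE}) flanked by a common pair of invertible factors $\Lambda$, $\Xi$ --- is exactly the paper's proof read in the opposite direction, and it can be made to work. But as written your proposal has a genuine gap, and it sits precisely where you place it yourself: the existence and the coincidence of the flanking words $\Lambda$ and $\Xi$ on the two sides is never established. You call this ``the main obstacle'' in which ``all the genuine content of the proof lives,'' and then you stop. That matching is not bookkeeping that can be deferred to the reader: conditions (1) and (2) are stated for a single ordering of the legs, the adjacent two-leg products produced by naive substitution (such as ${\cal F}_{12}^{-1}{\cal F}_{31}$ between ${\cal R}^F_{12}$ and ${\cal R}^F_{13}$) only match permuted instances of the cocycle relation after a non-obvious reorganization, and verifying that two independent reduction sequences terminate at the same pair $(\Lambda,\Xi)$ is the entire theorem. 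A proof attempt that identifies this step as the crux and leaves it unverified has not proved the statement.

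The missing idea is the organizing lemma that the paper uses to make the matching automatic. Set ${\cal F}_{123}:={\cal F}_{12}{\cal F}_{12,3}={\cal F}_{23}{\cal F}_{1,23}$ (condition (1)) and define its permutations by flips, ${\cal F}_{jik}:=\pi_{ij}({\cal F}_{ijk})$, ${\cal F}_{ikj}:=\pi_{jk}({\cal F}_{ijk})$; since the flips are algebra automorphisms of ${\cal A}^{\otimes 3}$, conditions (1) and (2) remain valid in every permuted instance. Then
\[
{\cal F}_{jik}{\cal R}_{ij}{\cal F}^{-1}_{ijk}={\cal F}_{ji}{\cal F}_{ji,k}{\cal R}_{ij}{\cal F}^{-1}_{ijk}={\cal F}_{ji}{\cal R}_{ij}{\cal F}_{ij,k}{\cal F}^{-1}_{ijk}={\cal R}^{F}_{ij}{\cal F}_{ij}{\cal F}_{ij,k}{\cal F}^{-1}_{ijk}={\cal R}^F_{ij},
\]
using in turn the flipped cocycle relation, condition (2), and the definition of ${\cal R}^F$; similarly ${\cal F}_{ikj}{\cal R}_{jk}{\cal F}^{-1}_{ijk}={\cal R}^F_{jk}$. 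These identities are what make your plan run, because their indices telescope:
\[
{\cal R}^F_{12}{\cal R}^F_{13}{\cal R}^F_{23}=({\cal F}_{321}{\cal R}_{12}{\cal F}^{-1}_{312})({\cal F}_{312}{\cal R}_{13}{\cal F}^{-1}_{132})({\cal F}_{132}{\cal R}_{23}{\cal F}^{-1}_{123})={\cal F}_{321}{\cal R}_{12}{\cal R}_{13}{\cal R}_{23}{\cal F}^{-1}_{123},
\]
and the same computation for the reversed product gives ${\cal R}^F_{23}{\cal R}^F_{13}{\cal R}^F_{12}={\cal F}_{321}{\cal R}_{23}{\cal R}_{13}{\cal R}_{12}{\cal F}^{-1}_{123}$. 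So your $\Lambda$ and $\Xi$ do exist and do coincide, namely $\Lambda={\cal F}_{321}$ and $\Xi={\cal F}_{123}$, but this is a consequence of the lemma above rather than of routine rearrangement. Once it is in place, (\ref{UYBE}) and the invertibility of ${\cal F}_{123}$ finish the argument; the paper phrases the identical computation in the forward direction, multiplying both sides of (\ref{UYBE}) on the left by ${\cal F}_{321}$ and pushing it through the three ${\cal R}$-factors, which converts each ${\cal R}_{ij}$ into ${\cal R}^F_{ij}$ and deposits the same ${\cal F}_{123}$ on the right of both sides.
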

\begin{proof}

It is convenient to introduce some handy notation that can be used in the following.
First, let ${\cal F}_{123} := {\cal F}_{12}{\cal F}_{1,23} = {\cal F}_{23} {\cal F}_{1,23}.$
Let also $i,j,k \in \{1,2,3\},$ then ${\cal F}_{jik} = \pi_{ij}( {\cal F}_{ijk})$ and
${\cal F}_{ikj} = \pi_{jk}({\cal F}_{ijk}),$ where $\pi$ is the flip map. This notation 
describes all possible permutations of the indices $1,\ 2,\ 3$. 

The proof is quite straightforward, \cite{Drinfeld},
we just give a brief outline here. 
We first prove that ${\cal F}_{jik}{\cal R}_{ij}{\cal F}^{-1}_{ijk} = {\cal R}^F_{ij},$ indeed via condition (2) of the proposition the definition of ${\cal R}^{F}$ and the notation introduced above we have
\begin{equation}
{\cal F}_{jik}{\cal R}_{ij}{\cal F}^{-1}_{ijk} = {\cal F}_{ji} {\cal F}_{ji,k} {\cal R}_{ij} {\cal F}^{-1}_{ijk} = {\cal F}_{ji}{\cal R}_{ij}{\cal F}_{ij,k}{\cal F}_{ijk}^{-1} = {\cal R}^{F}_{ij} {\cal F}_{ij}{\cal F}_{ij,k}{\cal F}^{-1}_{ijk} = {\cal R}^F_{ij}.
\end{equation}
Similarly, it is shown that ${\cal F}_{ikj}{\cal R}_{jk}{\cal F}^{-1}_{ijk} = {\cal R}^F_{jk}.$

Then from the YBE we have (see also \cite{Doikoutw}),
\begin{equation}
{\cal  F}_{321} {\cal R}_{12} {\cal R}_{13} {\cal R}_{23} ={\cal  F}_{321} 
{\cal R}_{23}{\cal R}_{13}{\cal R}_{12}\ \Rightarrow\  {\cal R}^F_{12} {\cal R}^F_{13} {\cal R}^F_{23}{\cal  F}_{123} 
=  {\cal R}^F_{23}{\cal R}^F_{13}{\cal R}^F_{12}{\cal  F}_{123}. \nonumber
\end{equation}
But ${\cal F}_{123}$ is invertible, hence ${\cal R}^F$ indeed satisfies the Yang-Baxter equation.
\end{proof}

\begin{theorem}[Set-theoretic twist \texorpdfstring{\cite{Doikoutw, DoGhVl, DoRySt}}{}] \label{twist2}  
Let ${\cal A}$
be the special set-theoretic YB algebra and ${\cal F}\in {\cal A} \otimes {\cal A},$  
such that ${\cal F} =\sum_{b\in X}h_b \otimes w_b^{-1},$
${\cal R}^F_{ij} := {\cal F}_{ji} {\cal F}_{ij}^{-1},$ $i,j \in \{1,2,3\}.$
We also define:
\begin{eqnarray}
{\cal F}_{1,23} := \sum_{a\in X} h_a\otimes w_a^{-1} \otimes 
w_a^{-1},  \quad {\cal F}_{12,3} : =  \sum_{a,b \in X}h_a\otimes h_{\sigma_a(b)} 
\otimes w^{-1}_{b} w^{-1}_a. \label{deff}
\end{eqnarray}
Let $\sigma_a, \tau_b: X \to X,$
be such that $\sigma_{\sigma_a(b)}(\tau_b(a))= a$ for every $a,b \in X.$  
Then, the following statements are true:
\begin{enumerate}
\item ${\cal F}_{12} {\cal F}_{12,3} ={\cal F}_{23} {\cal F}_{1,23} =: {\cal F}_{123}.$
\item For  $i,j,k \in \{1,2,3\}$:
(i) ${\cal F}_{ikj}{\cal F}_{ijk}^{-1} ={\cal R}^F_{jk} $
and 
(ii)  ${\cal F}_{jik} {\cal F}^{-1}_{ijk} ={\cal R}^F_{ij}.$
\end{enumerate}
That is, ${\cal F}$ is an admissible Drinfel'd twist.
\end{theorem}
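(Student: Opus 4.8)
The plan is to verify the two admissibility conditions of \cref{prot} directly from the explicit formulas, relying crucially on the algebraic relations \eqref{qualgbb}. To prepare, I would first compute the two products in statement (1) by expanding them in the generators and repeatedly applying the relation $w_a h_b = h_{\sigma_a(b)} w_a$ (equivalently $h_{\sigma_a(b)}w_a = w_a h_b$) together with $h_ah_b=\delta_{a,b}h_a$ and the group-like property of the $w_a$'s. Writing ${\cal F}_{12}{\cal F}_{12,3} = \bigl(\sum_b h_b\otimes w_b^{-1}\otimes 1\bigr)\bigl(\sum_{a,c} h_a\otimes h_{\sigma_a(c)}\otimes w_c^{-1}w_a^{-1}\bigr)$, the first tensor factor forces $a=b$ via $h_bh_a=\delta_{a,b}h_b$, and in the second factor I would move $w_b^{-1}$ past $h_{\sigma_b(c)}$; the identity $w_b^{-1}h_{\sigma_b(c)} = h_c\, w_b^{-1}$ (the inverse form of the commutation relation) is what makes this collapse to $\sum_{b,c} h_b\otimes h_c\, w_b^{-1}\otimes w_c^{-1}w_b^{-1}$. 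Computing ${\cal F}_{23}{\cal F}_{1,23} = \bigl(\sum_c 1\otimes h_c\otimes w_c^{-1}\bigr)\bigl(\sum_a h_a\otimes w_a^{-1}\otimes w_a^{-1}\bigr)$ similarly, I would again use $h_c w_a^{-1} = w_a^{-1} h_{\sigma_a(c)}$ to reconcile the middle factors, and expect both to agree, thereby defining ${\cal F}_{123}$.

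For statement (2) I would exploit the cocycle identity just established. Since ${\cal F}_{123} = {\cal F}_{12}{\cal F}_{12,3} = {\cal F}_{23}{\cal F}_{1,23}$, the two claims are essentially rearrangements: for (ii), ${\cal F}_{jik}{\cal F}_{ijk}^{-1} = {\cal F}_{ji}{\cal F}_{ji,k}{\cal F}_{ij,k}^{-1}{\cal F}_{ij}^{-1}$, and I would hope the three-factor part ${\cal F}_{ji,k}{\cal F}_{ij,k}^{-1}$ reduces to $1$ (or cancels appropriately) so that what remains is ${\cal F}_{ji}{\cal F}_{ij}^{-1} = {\cal R}^F_{ij}$ by the very definition ${\cal R}^F_{ij}:={\cal F}_{ji}{\cal F}_{ij}^{-1}$. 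The analogous manipulation using ${\cal F}_{123}={\cal F}_{23}{\cal F}_{1,23}$ should yield (i). This is where the hypothesis $\sigma_{\sigma_a(b)}(\tau_b(a))=a$ enters: it is precisely the reversibility condition needed to guarantee that the mixed two-site factors ${\cal F}_{ij,k}$ behave consistently under the flip, i.e. that $\pi_{ij}$ applied to the collapsed form of ${\cal F}_{123}$ produces a compatible ${\cal F}_{ji,k}$.

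The main obstacle I anticipate is the bookkeeping in statement (2)(ii), specifically verifying that the index-$k$ (third-slot) contributions genuinely cancel rather than leaving a residual $w$-dependent factor. The subtlety is that ${\cal F}_{ji,k}$ and ${\cal F}_{ij,k}$ carry the maps $\sigma$ and $\tau$ in their definitions through the collapsed form of ${\cal F}_{123}$, and the reversibility relation $\sigma_{\sigma_a(b)}(\tau_b(a)) = a$ is exactly what is required to undo the twisting of indices introduced by the flip. I would therefore isolate the third tensor slot of ${\cal F}_{ji,k}{\cal F}_{ij,k}^{-1}$, track how the $w^{-1}w^{-1}$ products combine using $w_aw_b = w_{\sigma_a(b)}w_{\tau_b(a)}$, and check that reversibility forces the product to telescope to the identity. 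Once this cancellation is confirmed, both parts of (2) follow mechanically, and combined with (1) they establish that ${\cal F}$ satisfies both conditions of \cref{prot}, hence is an admissible Drinfel'd twist.
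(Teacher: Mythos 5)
Your proposal is correct and takes essentially the same route as the paper: establish the cocycle identity (1) by direct computation in the algebra (both products collapse to $\sum_{a,c\in X} h_a \otimes h_c w_a^{-1} \otimes w_c^{-1}w_a^{-1}$), then prove (2) by showing the mixed factors are flip-invariant --- ${\cal F}_{1,23}={\cal F}_{1,32}$ trivially from the definition, and ${\cal F}_{12,3}={\cal F}_{21,3}$ by writing $w_b^{-1}w_a^{-1}=(w_aw_b)^{-1}=(w_{\sigma_a(b)}w_{\tau_b(a)})^{-1}$ and relabeling $\hat a=\sigma_a(b)$, $\hat b=\tau_b(a)$, which is precisely where $\sigma_{\sigma_a(b)}(\tau_b(a))=a$ enters --- so that the inner factors cancel and leave ${\cal F}_{ji}{\cal F}_{ij}^{-1}={\cal R}^F_{ij}$. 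This is exactly the paper's argument; the cancellation you flag as the main obstacle is resolved by that same relabeling.
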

\begin{proof}
The proof is straightforward based on the underlying algebra ${\cal A}$ 
(the detailed proof can also be found in \cite{Doikoutw, DoRySt}).
\begin{enumerate}
\item Indeed, this is proven by a direct computation and use of the special set-theoretic YB algebra. 
In fact, ${\cal F}_{123} = \sum_{a,b\in X} h_a \otimes h_b w_a^{-1} \otimes w_b^{-1} w_a^{-1}.$
\item Given the notation introduced before in the proof of Proposition \ref{prot} it  suffices to show  that
${\cal F}_{132} {\cal F}^{-1}_{123}= {\cal R}^F_{23} $ and ${\cal F}_{213} 
{\cal F}_{123}^{-1} = {\cal R}^F_{12} .$ 

We first show that ${\cal F}_{1,23} = {\cal F}_{1,32}$, which is straightforward from the definition in (\ref{deff}); notice that ${\cal F}_{1,23} = (\id \otimes \Delta){\cal F}$. Also,
\begin{eqnarray}
{\cal F}_{12,3} &=& \sum_{a,b \in X}h_a\otimes h_{\sigma_a(b)} 
\otimes (w_{a} w_b)^{-1} = \sum_{a,b \in X}h_a\otimes h_{\sigma_a(b)} 
\otimes (w_{\sigma_a(b)} w_{\tau_b(a)})^{-1}\nonumber \\ &=& \sum_{\hat a, \hat b\in X} h_{\sigma_{\hat a}(\hat b)} \otimes h_{\hat a} 
\otimes (w_{\hat a} w_{\hat b})^{-1} = {\cal F}_{21,3},
\end{eqnarray}
where we have set in the equation above $\hat a := \sigma_{a}(b)$ and $\hat b : = \tau_b(a)$ which leads to 
$a = \sigma_{\hat a}(\hat b)$ due to $\sigma_{\sigma_a(b)}(\tau_b(a)) = a.$

It then immediately follows (see also Proposition \ref{prot}):
\[{\cal F}_{132} {\cal F}_{123}^{-1} = {\cal F}_{32} {\cal F}_{1,32} {\cal F}_{1,23}^{-1}{\cal F}_{23}^{-1}= 
{\cal F}_{32} {\cal F}_{23}^{-1} = {\cal R}_{23}^F. \]
\[ {\cal F}_{213} {\cal F}_{123}^{-1} = {\cal F}_{21} {\cal F}_{21,3} {\cal F}_{12,3}^{-1} {\cal F}_{12}^{-1}
={\cal F}_{21} {\cal F}_{12}^{-1}  = {\cal R}^F_{12}. \]
\end{enumerate}
Due to Proposition \ref{prot}, we also deduce that ${\cal R}^F$ is a solution of the Yang-Baxter equation.
\end{proof}

\begin{remark}[Twisted universal \texorpdfstring{${\cal R}$-matrix}{R-matrix}] \label{remtw}
 We derive explicit
expressions of the twisted universal ${\cal R}$-matrix and the twisted coproducts
of the algebra. We recall the admissible twist ${\cal F} = \sum_{b\in X}h_b \otimes w_b^{-1}.$
\begin{itemize}

\item The twisted ${\cal R}$-matrix: 
\[{\cal R}^F = {\cal F}^{(op)} {\cal F}^{-1} = \sum_{a,b\in X}h_bw_a^{-1}  
 \otimes h_a w_{\sigma_a(b)}.\]

\item The twisted coproducts: $\Delta_F(y) = {\cal F} \Delta(y) {\cal F}^{-1},$ $y\in {\cal A}$ 
and recall from Theorem~\ref{basica2b}, that $(X,+)$ is a group and  for all $a\in X,$
\[ \Delta(w_a) =w_a\otimes w_a,\quad \Delta(h_a) = 
\sum_{b,c\in X} h_b \otimes h_c\big |_{b+c =a}.\]
Then, the twisted coproducts read as:
\begin{equation} \Delta_F(w_a) = \sum_{b \in X} w_a h_b\otimes w _{\tau_b(a)}, ~~~
\Delta_F(h_a) = \sum_{b,c \in X} h_b \otimes  h_c \big |_{b + \sigma_b(c) =a}, \label{dtw}
\end{equation}
\end{itemize}
and recall  $\tau_{b}(a):=\sigma_{\sigma_a(b)}^{-1}(a),$ hence ${\cal R}^F_{12} {\cal R}_{21}^F = 1_{{\cal A} \otimes {\cal A}}.$  It also follows that $${\cal R}^F \Delta_F(Y) = \Delta_F^{(op)}(Y) {\cal R}^F, Y \in {\cal A}$$ if $(X, +)$ is an abelian group (see a detailed proof in Theorem \ref{twist2b}).
\end{remark}

\begin{remark}[\bf Fundamental representation $\&$ the set-theoretic solution:]  \label{remfu2bc}  
Let ${\cal A}$ be the special set-theoretic algebra and $\rho:  {\cal A} \to \mbox{End}({\mathbb C}^n),$ such that
\begin{equation}
h_a\mapsto e_{a,a}, \quad  
w_a \mapsto \sum_{b \in X} e_{\sigma_a(b),b}.\label{repbb2}
\end{equation}
Moreover, ${\cal F} \mapsto F:= \sum_{a,b \in X} e_{a,a} \otimes e_{b, \sigma_a(b)}$ and $ {\cal R}^F \mapsto R^F:= 
\sum_{a,b\in X} e_{b,\sigma_a(b)} \otimes e_{a, \tau_b(a)},$
where we recall that for all $a,b,c \in X,$ $\sigma_{\sigma_{a}(b)}(\sigma_{\tau_{b}(a)}(c)) = \sigma_a(\sigma_b(c))$ (see also Proposition~\ref{qua2}), $\tau_{b}(a):=\sigma_{\sigma_a(b)}^{-1}(a)$ and $R^F_{12} R^F_{21} = 1_{n^2},$ where $ 1_{n^2}$ is 
the $n^2$ dimensional identity matrix. Then, $F$ is a combinatorial twist and $R^F$ is a combinatorial (set-theoretic) solution of the Yang-Baxter equation.
\end{remark}
 
We present below the $n$-fold twist (see also \cite{Doikoutw, Doikoupar}).
\begin{lemma}[The \texorpdfstring{$n$-fold}{n-fold} twist] \label{nfold0}   
Let ${\cal A}$ be the special set-theoretic YB algebra and let ${\cal F} \in {\cal A} \otimes {\cal A}$ be such that 
${\cal F} = \sum_{a \in X} h_a\otimes w_a^{-1}.$
Define also,
\begin{eqnarray}
  {\cal F}_{1, 2 3\ldots n} &:=& \sum_{a\in X} h_a \otimes \Delta^{(n-1)}(w_a^{-1}) =\sum_{a\in X} h_a \otimes w_a^{-1}  \otimes w_a^{-1} \otimes   \ldots \otimes w_a^{-1}, \nonumber\\ 
    {\cal F}_{1 2 \ldots n-1, n} &:=& \sum_{a_1, a_2, \ldots, a_{n-1} \in X} 
   h_{a_1} \otimes h_{\sigma_{a_1}(a_2)} \otimes h_{\sigma_{a_1}(\sigma_{a_2}(a_3))} \otimes \ldots \nonumber \\ 
   & \otimes &
   h_{\sigma_{a_1}(\sigma_{a_2}(\ldots 
   \sigma_{a_{n-2}}(a_{n-1})) \ldots )} 
\otimes w_{a_{n-1}}^{-1}
w_{a_{n-2}}^{-1} \ldots w_{a_{1}}^{-1}. \nonumber 
   \end{eqnarray}
Then,
\begin{enumerate}
\item ${\cal F}_{2\ldots n}
{\cal F}_{1,2\ldots n} =
{\cal F}_{12\ldots n-1} {\cal F}_{12\ldots n-1,n}=: {\cal F}_{12\ldots n}.$ 
\item The explicit expression of the $n$-fold twist is given as
\begin{eqnarray}
{\cal F}_{12\ldots n} 
&=&
\sum_{a_1, a_2, \ldots, a_{n-1} \in X} h_{a_1} \otimes h_{a_2} w_{a_1}^{-1} \otimes 
h_{a_3} w_{a_2}^{-1}w_{a_1}^{-1} \otimes \ldots \otimes \nonumber \\
& & h_{a_{n-1}}w_{a_{n-2}}^{-1} \ldots w_{a_1}^{-1} \otimes w_{a_{n-1}}^{-1}w_{a_{n-2}}^{-1} \ldots w_{a_1}^{-1}. 
 \label{nfold}
\end{eqnarray}

\item ${\cal F}_{1,2 3 \ldots j+1 j\ldots n}  =  {\cal F}_{1,2 3 \ldots j j+1\ldots n},$ $~n-1 \geq j>1,$
${\cal F}_{12 \ldots j+1 j\ldots n-1, n} = {\cal F}_{1 2 \ldots j j+1\ldots n-1,  n},$ $~n-1>j\geq 1,$
${\cal F}_{12 \ldots j+1 j\ldots n}  = {\cal R}^{F}_{jj+1} {\cal F}_{1 2 \ldots j j+1\ldots n},$ $~n-1 \geq j\geq 1.$ \end{enumerate}
\end{lemma}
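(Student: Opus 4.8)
\textbf{Proof proposal for \cref{nfold0}.}

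The plan is to prove the three statements inductively, reducing each to the twofold case already established in \cref{twist2} (Set-theoretic twist). The definitions of $\mathcal{F}_{1,23\ldots n}$ and $\mathcal{F}_{12\ldots n-1,n}$ are the $n$-fold generalizations of $\mathcal{F}_{1,23}$ and $\mathcal{F}_{12,3}$, so the structure of the argument should parallel the threefold proof, with the group-like behaviour of the $w_a$ and the multiplicativity of the $\sigma_a$ (via $\sigma_{\sigma_a(b)}(\tau_b(a)) = a$) doing the same work at each level.

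First I would prove statement (1) and simultaneously establish (2), since the explicit formula for $\mathcal{F}_{12\ldots n}$ drops out of verifying the cocycle-type identity $\mathcal{F}_{2\ldots n}\mathcal{F}_{1,2\ldots n} = \mathcal{F}_{12\ldots n-1}\mathcal{F}_{12\ldots n-1,n}$. For the left product I would use that $\mathcal{F}_{1,2\ldots n} = \sum_a h_a \otimes \Delta^{(n-1)}(w_a^{-1})$ is built from the group-like coproduct $\Delta(w_a^{-1}) = w_a^{-1}\otimes w_a^{-1}$, and $\mathcal{F}_{2\ldots n}$ is just the $(n-1)$-fold twist on the last $n-1$ factors, for which I may assume the explicit form (\ref{nfold}) by induction. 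Multiplying and repeatedly applying the relation $w_a h_b = h_{\sigma_a(b)} w_a$ to move the $w_a^{-1}$ factors past the $h$'s should collapse the product into (\ref{nfold}); the right product is handled the same way using $w_a w_b = w_{\sigma_a(b)} w_{\tau_b(a)}$ to reconcile the two orderings of the $w$'s in the last tensor slot. This is essentially the computation recorded at the end of the proof of \cref{twist2}, iterated.

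For statement (3), the key observation is that each identity is a localized (threefold) statement embedded in the $n$-fold tensor product, so I would apply the already-proven threefold results to the adjacent pair of slots $(j, j+1)$ while treating the remaining factors as spectators. The symmetry $\mathcal{F}_{1,23\ldots j+1 j\ldots n} = \mathcal{F}_{1,23\ldots j j+1\ldots n}$ follows because $\Delta^{(n-1)}(w_a^{-1})$ is totally symmetric in its output slots (all equal to $w_a^{-1}$), exactly as $\mathcal{F}_{1,23} = \mathcal{F}_{1,32}$ in \cref{twist2}. The middle identity follows from the analogue of $\mathcal{F}_{12,3} = \mathcal{F}_{21,3}$ proven there, applied to slots $j, j+1$ and using $\sigma_{\sigma_a(b)}(\tau_b(a)) = a$ to re-index. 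The third identity, $\mathcal{F}_{12\ldots j+1 j\ldots n} = \mathcal{R}^F_{j\,j+1}\,\mathcal{F}_{12\ldots j j+1\ldots n}$, is the genuine content: it should reduce to $\mathcal{F}_{ji k}\mathcal{F}_{ijk}^{-1} = \mathcal{R}^F_{ij}$ from \cref{prot} and \cref{twist2}, now with $i = j$, $j = j+1$, and $k$ standing for the combined block of all other slots via the factorization in statement (1).

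The main obstacle I anticipate is bookkeeping rather than mathematical depth: making the induction in statement (1) rigorous requires carefully tracking how the nested $\sigma$-images in $\mathcal{F}_{12\ldots n-1,n}$ arise from iterating the relation $w_a w_b = w_{\sigma_a(b)} w_{\tau_b(a)}$ across many factors, and confirming that the change of summation variables (the multi-index analogue of $\hat a = \sigma_a(b)$, $\hat b = \tau_b(a)$) is a bijection on $X^{n-1}$. Once the twofold and threefold cases are in hand, however, the spectator-slot argument for statement (3) is formal, and the chief risk is an indexing error in aligning the abstract identities of \cref{prot} with the concrete tensor positions $j, j+1$.
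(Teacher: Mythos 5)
Your proposal is correct and takes essentially the same approach as the paper, whose own proof is only a two-line sketch stating that the claims ``are proven by iteration and direct computation using the ${\cal A}$ algebra relations'' with part (2) of Theorem~\ref{twist2} used for statement (3) --- precisely the induction-plus-spectator-slot strategy you outline. Your plan is in fact more detailed than the paper's proof (your only slight imprecision is that the relation $w_a w_b = w_{\sigma_a(b)} w_{\tau_b(a)}$ is really needed for the re-indexing in statement (3), not for reconciling the last tensor slot in statement (1), where the two orderings already agree).
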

\begin{proof} 
These statements are proven by iteration and direct computation using the ${\cal A}$ algebra relations. Part (2) of Theorem \ref{twist2} is also used in proving (3) (see also \cite{Doikoutw, Doikoupar}).
\end{proof}

\subsection{The twisted Hopf algebra}
Motivated by Theorem \ref{basica2b} on the conditions that make the special set-theoretic YB algebra  
a Hopf algebra and by the twisted coproducts (\ref{dtw}) in Remark \ref{remtw} we prove the following Theorem (see also \cite{DoiRyb} for relevant results). Notice in particular the condition for all $a,b,c \in X,$ $b+\sigma_b(c) = a$ that appears in $\Delta_F(h_a) = \sum_{b,c \in X} h_b \otimes h_c\big|_{b + \sigma_{b}(c) =a}.$ It is thus natural to introduce a new binary operation, $\circ: X \times X \to X,$ such that $a\circ b : = a + \sigma_a(b),$ for all $a,b \in X.$

\begin{theorem} \label{corf0}
    Let ${\cal A}$ be the special set-theoretic YB algebra, $(X,+, 0)$  a group and for all $a,b \in X,$ $\sigma_a,\tau_b: X \to X,$ such that $\sigma_{\sigma_a(b)}(\tau_b(a)) = a.$  Let also for all $a,b \in X,$ $a\circ b := a + \sigma_a(b).$    
    \begin{enumerate}
    \item Then for all $a,b \in X,$ $\sigma_a(b) \circ \tau_b(a) = -a +a\circ b +a.$ 
    \item If in addition $(X, \circ)$ is a semigroup and for all $a,b,c \in X,$ $\sigma_a(b+c) = \sigma_a(b) + \sigma_a(c),$ then for all $a,b,c \in X,$ %  \item $a\circ 0 =a,$ left cancellativity in $(X, \circ)$ holds.
    \begin{enumerate}
    \item $\sigma_a(\sigma_b(c)) = \sigma_{a\circ b}(c).$ 
    %\item $a\circ (-b+c) = a- a \circ b + a\circ c.$
   % \item $\sigma_c(a+b) = \sigma_c(a) + \sigma_c(b).$
    \item $(X, \circ, 0)$ is a group.
    \item $a\circ (b+c) = a\circ b- a + a\circ c.$   
    \item $\sigma_a(0) = 0,$ $\tau_0(a) = a$ and $\sigma_0(a) = a,$ $\tau_a(0) = 0.$  
   \end{enumerate}
   \end{enumerate}
\end{theorem}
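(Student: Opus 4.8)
The overall plan is to treat Part~1 as a single substitution and then derive the four assertions of Part~2 in a dependency-respecting order, isolating the promotion of $(X,\circ)$ to a group as the only genuine hurdle. For Part~1 I would expand the left-hand side directly from the definition of $\circ$, writing $\sigma_a(b)\circ\tau_b(a) = \sigma_a(b) + \sigma_{\sigma_a(b)}(\tau_b(a))$, and then invoke the standing hypothesis $\sigma_{\sigma_a(b)}(\tau_b(a))=a$ to collapse the second summand to $a$, giving $\sigma_a(b)+a$. On the other side, $-a + a\circ b + a = -a + (a+\sigma_a(b)) + a = \sigma_a(b)+a$ using only associativity and cancellation in $(X,+)$; the two words coincide even when $(X,+)$ is non-abelian, so Part~1 requires nothing beyond the definition of $\circ$ and the hypothesis on $\tau$.

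For Part~2 I would first dispatch the purely distributive statements. Claim~(c) is immediate: $a\circ(b+c) = a + \sigma_a(b+c) = a + \sigma_a(b) + \sigma_a(c)$ by left distributivity, while $a\circ b - a + a\circ c$ expands via the group laws of $+$ to the same word $a + \sigma_a(b) + \sigma_a(c)$. Claim~(a) is where associativity of $\circ$ enters: I would compute $a\circ(b\circ c)$ by expanding $b\circ c = b + \sigma_b(c)$ and applying left distributivity to obtain $a + \sigma_a(b) + \sigma_a(\sigma_b(c))$, and compute $(a\circ b)\circ c = (a\circ b) + \sigma_{a\circ b}(c) = a + \sigma_a(b) + \sigma_{a\circ b}(c)$; equating these via the semigroup axiom and left-cancelling $a+\sigma_a(b)$ in $(X,+)$ yields $\sigma_a(\sigma_b(c)) = \sigma_{a\circ b}(c)$.

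Next I would secure the identity and the $\tau$-identities of~(d). Setting $b=c=0$ in left distributivity gives $\sigma_a(0) = \sigma_a(0)+\sigma_a(0)$, hence $\sigma_a(0)=0$, which already shows $a\circ 0 = a$. Feeding $a=b=0$ into claim~(a) and using $0\circ 0 = 0+\sigma_0(0)=0$ gives $\sigma_0\circ\sigma_0 = \sigma_0$; since we work in the \emph{special} YB algebra every $\sigma_a$ is a bijection, so $\sigma_0 = \id$ and thus $0\circ a = \sigma_0(a)=a$, making $0$ a two-sided $\circ$-identity. With these, the two $\tau$-identities follow from $\tau_b(a)=\sigma_{\sigma_a(b)}^{-1}(a)$: namely $\tau_0(a) = \sigma_{\sigma_a(0)}^{-1}(a) = \sigma_0^{-1}(a)=a$, and $\tau_a(0) = \sigma_{\sigma_0(a)}^{-1}(0) = \sigma_a^{-1}(0) = 0$.

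The main obstacle is~(b), promoting the monoid $(X,\circ,0)$ to a group. Bijectivity of $\sigma_a$ supplies a \emph{right} inverse: setting $\bar a := \sigma_a^{-1}(-a)$ gives $a\circ\bar a = a + \sigma_a(\sigma_a^{-1}(-a)) = a + (-a) = 0$. The non-routine point is passing from right inverses to two-sided ones, and here I would invoke the elementary monoid fact that a monoid in which every element has a right inverse is already a group. Concretely, with $a\circ\bar a = 0$ and $\bar a\circ\overline{\bar a} = 0$, associativity gives $\bar a\circ a = (\bar a\circ a)\circ(\bar a\circ\overline{\bar a}) = \bar a\circ(a\circ\bar a)\circ\overline{\bar a} = \bar a\circ\overline{\bar a} = 0$, so $\bar a$ is a two-sided inverse and $(X,\circ,0)$ is a group. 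The chief thing to guard against is circularity: claim~(b) must be proved only after claim~(a) and the facts $\sigma_0=\id$, $\sigma_a(0)=0$, since these are exactly what furnish both the identity and the explicit right inverse.
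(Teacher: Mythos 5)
Your proof is correct, and on Part 1 and claim (a) it coincides with the paper's: expand via the definition of $\circ$ and use $\sigma_{\sigma_a(b)}(\tau_b(a))=a$, respectively compare the two expansions of associativity and cancel $a+\sigma_a(b)$ in $(X,+)$. Where you genuinely diverge is in the dependency structure of (b), (c), (d). The paper first derives from (a) the intermediate identity $a\circ(-b+c)=a-a\circ b+a\circ c$ (whose derivation implicitly uses surjectivity of $c\mapsto b\circ c$), extracts the right identity from it, proves left cancellation, constructs the right inverse $\sigma_a^{-1}(-a)$, upgrades it to a two-sided inverse via a uniqueness argument, obtains the left identity last, and only then deduces (c) from the intermediate identity and (d) from Part 1 together with the two group structures. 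You instead prove (c) in one line directly from $\sigma_a(b+c)=\sigma_a(b)+\sigma_a(c)$, get $\sigma_a(0)=0$ from distributivity and $\sigma_0=\id$ from (a) plus bijectivity, so that $0$ is a two-sided identity before inverses ever appear; then the standard monoid-with-right-inverses argument gives (b), and (d) follows from the formula $\tau_b(a)=\sigma_{\sigma_a(b)}^{-1}(a)$. Your ordering buys two things: it bypasses the substitution step behind the paper's intermediate identity, and it is more watertight at the one place where the paper is terse --- the paper's step $a^{-1}\circ(a^{-1})^{-1}=0\Rightarrow(a^{-1})^{-1}=a\circ 0=a$ presupposes a left identity or a cancellation not yet established at that point, whereas your computation $\bar a\circ a=(\bar a\circ a)\circ(\bar a\circ\overline{\bar a})=\bar a\circ(a\circ\bar a)\circ\overline{\bar a}=\bar a\circ\overline{\bar a}=0$ uses only the right identity and associativity. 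What the paper's longer route buys is the identity $a\circ(-b+c)=a-a\circ b+a\circ c$ itself, a reformulation of skew-brace distributivity that it then reuses to prove (c).
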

\begin{proof}

$ $

\begin{enumerate}

\item Recall $\sigma_{\sigma_a(b)}(\tau_b(a)) =a,$ and 
$$  a\circ b\  = a+ \sigma_{a}(b) ~~\mbox{and} ~~ \sigma_a(b)\circ \tau_b(a)=  \sigma_{a}(b) + \sigma_{\sigma_a(b)}(\tau_b(a)) =\sigma_{a}(b) +a.$$
The two equations above lead to $-a+a\circ b +a=\sigma_a(b)\circ \tau_b(a).$

\item We now assume that $(X, \circ)$ is a semigroup and $\sigma_a$ is a $(X,+)$ group homomorphism for all $a \in X.$
\begin{enumerate}
 \item From associativity in $(X, \circ)$:
\begin{eqnarray}
  &&  (a\circ b)\circ c = a\circ b + \sigma_{a\circ b}(c)~~~~~\mbox{and}\nonumber \\
    && a\circ(b\circ c) = a +\sigma_a(b\circ c) = a+ \sigma_a(b+\sigma_b(c)) =\nonumber\\
    && a + \sigma_a(b) + \sigma_{a}(\sigma_b(c))= a\circ b +\sigma_{a}(\sigma_b(c)). \nonumber
\end{eqnarray}
From the two equations above we conclude that $\sigma_a(\sigma_b(c)) =\sigma_{a\circ b}(c).$

\item From $\sigma_a(\sigma_b(c)) =\sigma_{a\circ b}(c)$ we obtain for all $a,b, c \in X$
\begin{eqnarray}
&& -a + a\circ \sigma_b(c) = -a\circ b  + a\circ b \circ c \Rightarrow \nonumber \\
&& a\circ(-b +b\circ c) = a - a \circ b + a\circ b \circ c. \label{e1} \nonumber
\end{eqnarray}
That is for all $a,b,c \in X,$ 
\begin{equation}
    a\circ (-b +c)= a -a\circ b + a\circ c. \label{e00}
\end{equation}

 There is a right neutral element.
From the distributivity condition above,
\begin{eqnarray}
    a\circ (-0+b) = a \circ b \Rightarrow a -a\circ 0 + a\circ b = a \circ b \Rightarrow a\circ 0 =a. \nonumber
\end{eqnarray}

 Also, from the bijectivity of $\sigma_a$ for all $a \in X:$
\begin{eqnarray}
    \sigma_a(b) = \sigma_b(c) \Rightarrow b =c \nonumber
\end{eqnarray}
which leads to 
$$a\circ b = a \circ c \Rightarrow b =c,$$
i.e. left cancellation holds.

There is a unique right inverse in $(X, \circ),$ indeed, $a^{-1} := \sigma_a^{-1}(-a)$ for all $a \in X,$ then 
$$a \circ a ^{-1} =a + \sigma_a(\sigma^{-1}_a(-a)) = a -a = 0,$$ which is also a left inverse.
Indeed, from the definition of the inverse, $a^{-1}\in X,$ so there is a unique right inverse element for $a^{-1}$ denoted as $(a^{-1})^{-1},$
then 
$$a^{-1} \circ (a^{-1})^{-1} = 0\ \Rightarrow  (a^{-1})^{-1} = a \circ 0 =a\ \Rightarrow a^{-1}\circ a =0.$$
Also,
$$a \circ 0 \circ a^{-1} = 0 \Rightarrow a \circ 0 = 0 \circ a,$$ i.e. $0$ is also a left neutral element in $(X, \circ).$ 
And we conclude that $(X, \circ)$ is a group.

% Moreover,  $a^{-1}$ is also a left inverse, indeed from associativity,
   % $$ a \circ a^{-1} \circ a = 0\circ a = a \circ 0 \Rightarrow a \circ (a^{-1} \circ a) = a \circ 0$$ and from left cancellation, $a^{-1} \circ a =0.$

\item From the distributivity condition (\ref{e00}),
\begin{eqnarray}
    a\circ (-b +0) = a\circ (-b) \Rightarrow a -a \circ b + a = a\circ (-b). \label{e2} 
\end{eqnarray}
Then, from (\ref{e00}), (\ref{e2}):
\begin{eqnarray}
a\circ (b+c) = a - a\circ (-b) + a\circ c = a\circ b -a + a\circ c. \label{dis} \nonumber
\end{eqnarray}

 \item These equalities follow from expressions \[\sigma_a(b) = -a + a\circ b, -a +a\circ b +a = \sigma_a(b) \circ\tau_b(a)\] and the fact that both $(X, +, 0),$ $(X, \circ, 0)$ are groups.
 \hfill \qedhere
 \end{enumerate}
 \end{enumerate}
\end{proof}
Notice that if $(X, +)$ is an abelian group, then $a\circ b = \sigma_a(b) \circ \tau_b(a).$

 Algebraic structures as the one derived in Theorem \ref{corf0}, where $X$ is a non-empty set equipped with two group operation $+, \circ,$ such that $a\circ(b+c) = a \circ b -a +b\circ c,$ for all $a,b,c \in X$ are known as  skew {\it left braces} \cite{Rump1, Rump2, GV}. If $(X, +)$ is abelian then the structure is called a left brace. Braces were introduced by Rump \cite{Rump1, Rump2, JeOk} in the context of finding involutive set-theoretic solutions of the Yang-Baxter equation. The precise definition of (skew) braces is given below.
 \begin{definition}  \label{defbrace} 
A  skew left brace is a set $X$ together with two group operations \[+,\circ :X\times X\to X.\]
The $+$ operation is called addition and $\circ$ is called multiplication, such that for all $ a,b,c\in B$,
\begin{equation}\label{def:dis}
a\circ (b+c)=a\circ b-a+a\circ c.
\end{equation}
\end{definition}
If $(X,+)$ is an abelian group, then $(X, +, \circ)$ is called a left brace. In this paper, whenever we say (skew) brace, we mean a (skew) left brace. Recall also that for every (skew) brace $0=1,$ where $0$ is the neutral element in $(X, +)$ and $1$ is the neutral element in $(X,\circ)$.

\begin{lemma} \label{centr1}
    Let ${\cal A}$ 
be the special set-theoretic YB algebra. Let also $(X, +, \circ)$ be a skew brace and for all $a,b \in X,$ $\sigma_a, \tau_b: X\to X,$ such that
\begin{equation}
 \sigma_a(b) = - a +a \circ b, ~~~~\sigma_{\sigma_a(b)}(\tau_b(a)) =a. \label{condition0b}
\end{equation} 
Then $w_0$ is a central element in ${\cal A},$ where $0$ is the neutral element in $(X, +)$ and $(X, \circ).$
\end{lemma}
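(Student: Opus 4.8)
The plan is to show that $w_0$ commutes with all the generators $h_a$ and $w_a^{\pm 1}$ of the algebra, which suffices since these generate $\mathcal{A}$. The key input is that in a skew brace the neutral elements coincide, $0=1$, so $0$ is simultaneously the identity for $+$ and for $\circ$; combined with the identities $\sigma_a(0)=0$, $\tau_0(a)=a$, $\sigma_0(a)=a$, $\tau_a(0)=0$ established in part 2(d) of Theorem~\ref{corf0}, this should make all the structure maps behave trivially on the index $0$.

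First I would check commutativity of $w_0$ with $h_b$. Using the last relation in (\ref{qualgbb}), namely $w_a h_b = h_{\sigma_a(b)} w_a$, with $a=0$, I would invoke $\sigma_0(b)=b$ (from Theorem~\ref{corf0}, part 2(d)) to obtain $w_0 h_b = h_{\sigma_0(b)} w_0 = h_b w_0$. This immediately gives $[w_0, h_b]=0$ for all $b\in X$, and the analogous relation for $w_0^{-1}$ follows by multiplying on both sides by $w_0^{-1}$.

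Next I would check commutativity of $w_0$ with $w_b$. Here the relevant relation is $w_a w_b = w_{\sigma_a(b)} w_{\tau_b(a)}$. Setting $a=0$ and using $\sigma_0(b)=b$ together with $\tau_b(0)=0$ yields $w_0 w_b = w_{\sigma_0(b)} w_{\tau_b(0)} = w_b w_0$. Setting instead $b=0$ and using $\sigma_a(0)=0$ and $\tau_0(a)=a$ gives $w_a w_0 = w_{\sigma_a(0)} w_{\tau_0(a)} = w_0 w_a$, confirming consistency. Thus $w_0$ commutes with every $w_b$, and commutativity with $w_b^{-1}$ follows by the usual inversion argument.

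Since $w_0$ commutes with $h_b$, $w_b$ and $w_b^{-1}$ for all $b\in X$, and with $1_{\mathcal{A}}$ trivially, it commutes with every generator and hence lies in the center of $\mathcal{A}$. The main obstacle, if any, is purely bookkeeping: one must be sure that the four identities $\sigma_a(0)=0$, $\tau_0(a)=a$, $\sigma_0(a)=a$, $\tau_a(0)=0$ are precisely the ones needed, and that they are legitimately available here. They follow from Theorem~\ref{corf0}~part~2(d) once we know $(X,+,\circ)$ is a skew brace satisfying (\ref{condition0b}); in particular the relation $\sigma_a(b)=-a+a\circ b$ is exactly the hypothesis (\ref{condition0b}), so the setup of Lemma~\ref{centr1} fits the framework of Theorem~\ref{corf0}, and no further computation is required beyond substituting $0$ into the algebra relations.
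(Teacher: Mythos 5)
Your proof is correct and takes essentially the same route as the paper: both arguments substitute the index $0$ into the relations (\ref{qualgbb}) and use $\sigma_0(b)=b$, $\tau_b(0)=0$ to conclude that $w_0$ commutes with every generator $h_b$, $w_b^{\pm 1}$. The only difference is bookkeeping—the paper simply asserts these two identities, while you justify them via Theorem~\ref{corf0}, part 2(d), which is legitimate here because the skew brace distributivity together with $\sigma_a(b)=-a+a\circ b$ yields $\sigma_a(b+c)=\sigma_a(b)+\sigma_a(c)$, so that theorem's hypotheses are indeed met.
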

\begin{proof}
    Note that $\sigma_0(b) =b,$ $\tau_b(0) =0,$ then from (\ref{qualgbb}) for all $b\in X,$ 
    \[w_0 w_b = w_{\sigma_0(b)} w_{\tau_b(0)} = w_{b} w_0 ~~\mbox{and} ~~w_0h_b = h_{\sigma_0(b)} w_0 = h_bw_0.\qedhere\]
    \end{proof}

\begin{theorem} \label{twist2b} Let ${\cal A}$ 
be the special set-theoretic YB algebra. If $(X, +, \circ)$ is a brace and for all $a,b \in X,$ $\sigma_a,\ \tau_b : X \to X,$ such that $\sigma_a(b)= -a +a \circ b$ and $\sigma_{\sigma_a(b)}(\tau_b(a)) =a,$ then $({\cal A}, \Delta_F, \epsilon, \tilde s)$ is a  Hopf algebra, where the twisted coproducts are given in 
Remark \ref{remtw}, $\epsilon$ is given in Theorem \ref{basica2b} and $\tilde s:  {\cal A} \to {\cal A},$ such that for all $a \in 
 X,$ 
 \begin{equation}
 \tilde s(h_a) = h_{a^{-1}}, ~~~
 \tilde s(w_a) = \sum_{b\in X} h_bw^{-1}_{\tau_{b^{-1}}(a)}, \label{antip}
 \end{equation}
 $a^{-1}$ is the inverse of $a \in X,$ in the group $(X, \circ).$ 
If in addition for all $a,b \in X,$ $w_a w_b = w_{a\circ b}$ and ${\cal R}^F$ is given in Remark \ref{remtw}, then 
 $({\cal A}, \Delta_F, \epsilon, \tilde s, {\cal R}^F)$ is a quasi-triangular Hopf algebra.
\end{theorem}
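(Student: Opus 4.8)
For the Hopf-algebra claim I would check the axioms of Definition~\ref{hopf} directly on generators, while for quasi-triangularity I would recognise that the extra relation $w_aw_b=w_{a\circ b}$ promotes ${\cal F}$ to a genuine Drinfel'd $2$-cocycle and then invoke the standard twisting theory. Throughout I record the simplification $b+\sigma_b(c)=b\circ c$, so that the twisted coproducts of Remark~\ref{remtw} read $\Delta_F(h_a)=\sum_{x\circ y=a}h_x\otimes h_y$ and $\Delta_F(w_a)=\sum_{s\in X}w_ah_s\otimes w_{\tau_s(a)}$, and I freely use the brace consequences from Theorem~\ref{corf0}, chiefly $\sigma_{a\circ b}=\sigma_a\sigma_b$, $\sigma_0=\id$, $\tau_0(a)=a$ (whence index-inverse equals map-inverse, $\sigma_{a^{-1}}=\sigma_a^{-1}$), together with additivity of each $\sigma_a$ and commutativity of $+$.

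That $\Delta_F$ is an algebra homomorphism is automatic, being conjugation $\Delta_F(y)={\cal F}\Delta(y){\cal F}^{-1}$ of the homomorphism $\Delta$ by an invertible element; hence coassociativity, the counit law and the antipode law need only be verified on $h_a,w_a$. On $h_a$ coassociativity is immediate from associativity of $\circ$, the counit law from $0\circ y=y$, and the antipode law from $x\circ x^{-1}=0$ with $\sum_sh_s=1_{\cal A}$. On $w_a$ a reindexing by $x\circ y=s$ shows coassociativity is equivalent to the single identity $\tau_t(\tau_u(a))=\tau_{u\circ t}(a)$, i.e. that $b\mapsto\tau_b$ is a right action of $(X,\circ)$. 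I would establish this via the equivalent $\sigma_a(u\circ t)=\sigma_a(u)\circ\sigma_{\tau_u(a)}(t)$: expanding with $\sigma_x(y)=-x+x\circ y$ and $\sigma_x\sigma_y=\sigma_{x\circ y}$, the defining relation $\sigma_{\sigma_a(u)}(\tau_u(a))=a$ first gives $\sigma_a(u)\circ\tau_u(a)=\sigma_a(u)+a=a\circ u$, whence $\sigma_a(u)\circ\sigma_{\tau_u(a)}(t)=\sigma_a(u)+\sigma_{a\circ u}(t)=-a+a\circ u\circ t=\sigma_a(u\circ t)$, the cancellations being the abelian law for $+$. Finally, extending $\tilde s$ anti-multiplicatively (a quick check that (\ref{antip}) respects (\ref{qualgbb}), again via $\sigma_{a\circ b}=\sigma_a\sigma_b$), the special case $\tau_{s^{-1}}(\tau_s(a))=\tau_{s\circ s^{-1}}(a)=\tau_0(a)=a$ yields $m(\id\otimes\tilde s)\Delta_F(w_a)=\sum_sw_ah_sw_a^{-1}=\sum_sh_{\sigma_a(s)}=1_{\cal A}=\epsilon(w_a)1_{\cal A}$, with $m(\tilde s\otimes\id)$ and the counit law on $w_a$ handled identically using $\tau_0(a)=a$.

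For the quasi-triangular claim I would first note that $w_aw_b=w_{a\circ b}$ is precisely the hypothesis under which ${\cal F}_{12,3}=(\Delta\otimes\id){\cal F}$ (while ${\cal F}_{1,23}=(\id\otimes\Delta){\cal F}$ holds unconditionally), so that identity~(1) of Theorem~\ref{twist2}, ${\cal F}_{12}{\cal F}_{12,3}={\cal F}_{23}{\cal F}_{1,23}$, becomes the counital $2$-cocycle equation for ${\cal F}$ relative to $\Delta$. Since $(X,+)$ is abelian, the untwisted Hopf algebra of Theorem~\ref{basica2b} is cocommutative, hence trivially quasi-triangular with ${\cal R}=1_{{\cal A}\otimes{\cal A}}$; Drinfel'd's twisting theorem then delivers at once that $({\cal A},\Delta_F,\epsilon,\tilde s,{\cal R}^F)$ is quasi-triangular with ${\cal R}^F={\cal F}^{(op)}{\cal R}{\cal F}^{-1}={\cal F}^{(op)}{\cal F}^{-1}$, matching Remark~\ref{remtw}. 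Concretely, ${\cal R}^F\Delta_F(Y)=\Delta_F^{(op)}(Y){\cal R}^F$ follows from cocommutativity $\Delta^{op}=\Delta$ by the one-line conjugation argument of Remark~\ref{remtw}, while the hexagon identities $(\id\otimes\Delta_F){\cal R}^F={\cal R}^F_{13}{\cal R}^F_{12}$ and $(\Delta_F\otimes\id){\cal R}^F={\cal R}^F_{13}{\cal R}^F_{23}$ are exactly the content of the cocycle property; alternatively they may be extracted from the combinatorial relations of Theorem~\ref{twist2} and Lemma~\ref{nfold0}.

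I expect the main obstacle to be conceptual rather than computational: the point is that coassociativity of $\Delta_F$ on the $w_a$ is governed by the right-action identity $\tau_{u\circ t}(a)=\tau_t(\tau_u(a))$, which survives for any brace \emph{even though} ${\cal F}$ fails to be a $2$-cocycle when $w_aw_b\neq w_{a\circ b}$. This explains why the full Hopf structure already holds under the hypotheses of the first assertion, whereas the hexagon axioms—being genuinely the $2$-cocycle condition—are what force the additional relation $w_aw_b=w_{a\circ b}$ in the second assertion.
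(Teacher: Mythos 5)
Your proof is correct, and its Hopf-algebra half runs along essentially the same lines as the paper's: the paper also verifies coassociativity, counit and antipode on generators, and its key displayed formula $(\Delta_F\otimes\id)\Delta_F(w_a)=(\id\otimes\Delta_F)\Delta_F(w_a)=\sum_{b,c\in X}w_ah_b\otimes w_{\tau_b(a)}h_c\otimes w_{\tau_{b\circ c}(a)}$ is exactly your right-action identity $\tau_c(\tau_b(a))=\tau_{b\circ c}(a)$, which the paper asserts but does not derive; your derivation of it from $\sigma_a(u)\circ\tau_u(a)=a\circ u$ and $\sigma_{a\circ b}=\sigma_a\sigma_b$ fills in that step. (For the counit law the paper argues instead via centrality of $w_0$, Lemma~\ref{centr1}, rather than generator by generator; both work.) Where you genuinely diverge is quasi-triangularity. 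The paper proves the hexagon identities by direct computation inside ${\cal A}$: it expands ${\cal R}^F_{13}{\cal R}^F_{23}$ and ${\cal R}^F_{13}{\cal R}^F_{12}$ using the algebra relations and matches them against $(\Delta_F\otimes\id){\cal R}^F$ and $(\id\otimes\Delta_F){\cal R}^F$ through a reindexing that uses the brace identity $\tau_c(\sigma_a(b))=\sigma_{a_2}(b)$ (its equations (\ref{RR})--(\ref{dfinal})). You instead observe that $w_aw_b=w_{a\circ b}$ is precisely what makes ${\cal F}_{12,3}=(\Delta\otimes\id){\cal F}$ (this is part~(1) of Lemma~\ref{corf} for $n=3$), so that identity~(1) of Theorem~\ref{twist2} becomes the genuine $2$-cocycle condition, and then you invoke Drinfel'd's twisting theorem applied to the cocommutative (hence trivially quasi-triangular, ${\cal R}=1_{{\cal A}\otimes{\cal A}}$) untwisted Hopf algebra. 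This is cleaner and more conceptual, and it explains structurally why the extra relation enters only in the second assertion. Two points you should make explicit to close the argument: (i) the standard twisting theorem assumes the cocycle is counital, and $(\epsilon\otimes\id){\cal F}=w_0^{-1}$ is not obviously $1_{\cal A}$; but under your hypothesis $w_aw_b=w_{a\circ b}$ one has $w_0w_0=w_0$, hence $w_0=1_{\cal A}$, so counitality does hold; (ii) the antipode produced by the twisting theorem has the form $x\mapsto U s(x)U^{-1}$, which is not visibly the $\tilde s$ of (\ref{antip}), so you should appeal to uniqueness of the antipode on a bialgebra (or to your own direct verification of the antipode axiom for $\tilde s$) to identify the two.
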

\begin{proof}
This is a consequence of Theorem \ref{basica2b}, Remark \ref{remtw} and Lemma \ref{centr1}. 

We first prove the coassociativity of the twisted coproducts; indeed, due to the associativity in $(X, \circ),$ for all $a \in X,$ 
$$(\Delta_F \otimes \id) \Delta_F(h_a)=(\id \otimes\Delta_F)\Delta_F(h_a) = \sum_{b,c,d \in X} h_{b}\otimes h_c \otimes h_d|_{b\circ c \circ d = a}.$$  
Also, due to $\tau_b(a) = \sigma_{\sigma_a(b)}^{-1}(a)$ and $\sigma_a(b) = -a + a\circ b,$
$$ (\Delta_F \otimes \id)\Delta_F(w_a) =(\id \otimes\Delta_F)\Delta_F(w_a)= \sum_{b,c \in X} w_a h_b \otimes w_{\tau_b(a)}h_c \otimes w_{\tau_{b\circ c}(a)}.$$
Moreover, we observe that $(\epsilon \otimes \id){\cal F} = w_0,$ recall from Lemma \ref{centr1} that $w_0$ is central in ${\cal A};$ also $(\id \otimes \epsilon){\cal F} = 1_{\cal A},$ which lead to:
$$(\epsilon \otimes \id)\Delta_F(x) = (\id \otimes \epsilon)\Delta_F(x)= x, ~~~x\in {\cal A}.$$ This concludes our proof that $({\cal A}, \Delta_F, \epsilon)$ is a bialgebra. Moreover, from the form of the antipode $\tilde s$ (\ref{antip}), we show that
\begin{equation}
m(\tilde s \otimes \id)\Delta_F(x) = m (\id \otimes \tilde s)\Delta_F(x) = \epsilon(x) 1_{\cal A}, ~~~~x \in {\cal A}.\nonumber
\end{equation}
And this concludes that proof that $({\cal A}, \Delta_F, \epsilon, \tilde s)$ is a Hopf algebra.

To show that $({\cal A}, \Delta_F, \epsilon, \tilde s, {\cal R}^F)$ is a quasi-triangular Hopf algebra we also need to show conditions (1) and (2) of Definition 1.2. The Hopf algebra $({\cal A}, \Delta, \epsilon, s),$ is cocommutative due the fact that $(X, +)$ is an abelian group, i.e. for $x\in {\cal A,}$ $\Delta^{(op)}(x) = \Delta(x)$ and for ${\cal F}$ being the admissible twist of Theorem \ref{twist2}:
$${\cal F}^{(op)} \Delta^{(op)}(x)({\cal F}^{(op)})^{-1} {\cal F}^{(op)} {\cal F}^{-1} ={\cal F}^{(op)}{\cal F}^{-1} {\cal F}\Delta(x) {\cal F}^{-1} \Rightarrow \Delta_F^{(op)}(x){\cal R}^F = {\cal R}^F \Delta_F(x).$$

From the algebraic relations of the special set-theoretic YB algebra and recalling that \[{\cal R}^F = \sum_{a,b \in X} h_b w_a^{-1} \otimes h_a w_{\sigma_a(b)}, a\circ b = \sigma_a(b) \circ \tau_b(a), \sigma_a(b) = -a + a\circ b,\] and $w_a w_b = w_{a\circ b},$ we deduce
\begin{eqnarray}
{\cal R}_{13}^F{\cal R}_{23}^F &=&  \sum_{a,b,c \in X} h_bw_a^{-1} \otimes h_c w^{-1}_{\tau_b(a)} \otimes h_a w_{\sigma_a(b)} w_{\sigma_{\tau_b(a)}}(c) \nonumber \\ &=& \sum_{a,b,c \in X} h_bw_a^{-1} \otimes h_c w^{-1}_{\tau_b(a)} \otimes h_a w_{\sigma_a(b\circ c)} = (\Delta_F \otimes \id){\cal R}^F. \nonumber
\end{eqnarray}
Similarly,
\begin{eqnarray}
{\cal R}_{13}^F{\cal R}_{12}^F &=&  
\sum_{a,b,\hat a, \hat b \in X} h_bw_a^{-1}h_{\sigma_a(\hat b)} w_{\hat a}^{-1} \otimes h_{\hat a} w_{\sigma_{\hat a}(\sigma_a(\hat b))} \otimes h_a w_{\sigma_a(b)}  \nonumber \\ &=& 
\sum_{a,\hat a, b \in X} h_b w_{\hat a \circ a}^{-1} \otimes h_{\hat a} w_{\sigma_{\hat a \circ a}(b)} \otimes h_a w_{\sigma_a(b)}. \label{RR}
\end{eqnarray}
Also,
\begin{eqnarray}
(\id \otimes \Delta_F){\cal R}^F &=& \sum_{a,b\in X}h_b w_a^{-1} \otimes \Delta(h_a w_{\sigma_a(b)})  \nonumber\\  &=&\sum_{a_1\circ a_2 =a,b,c \in X} h_b w_a^{-1} \otimes h_{a_1}w_{\sigma_a(b)} \otimes h_{a_2} w_{\tau_{c}(\sigma_a(b))}\big|_{a_1 = \sigma_{\sigma_a(b)}(c)}. \label{delta2}
\end{eqnarray}
From the condition $a_1 = \sigma_{\sigma_a(b)}(c)$ we deduce $ c =\sigma^{-1}_{\sigma_a(b)}(a_1) = \sigma_{(\sigma_a(b))^{-1}}(a_1),$ we also recall $a\circ b = \sigma_a(b) \circ \tau_b(a),$ and $\sigma_a(b) = -a + a\circ b$  and $a_1\circ a_2 =a$ (\ref{delta2}), which lead to
\begin{eqnarray}
\tau_c(\sigma_{a}(b)) &=& (\sigma_{\sigma_a(b)}(c))^{-1} \circ \sigma_a(b) \circ c = a_1^{-1} \circ \sigma_a(b) \circ \sigma_{(\sigma_a(b))^{-1}}(a_1) \nonumber\\ &=& a_1^{-1}\circ(\sigma_a(b) + a_1) = a_1^{-1}\circ \sigma_a(b) -a_1^{-1} =-a_2 + a_2\circ b = \sigma_{a_2}(b). \label{tau2}
\end{eqnarray}
From equations (\ref{delta2}) and (\ref{tau2}) we conclude,
\begin{equation}
(\id \otimes \Delta_F){\cal R}^F=\sum_{a_1, a_2, b \in X} h_b w_{a_1 \circ a_2}^{-1} \otimes h_{a_1}w_{\sigma_{a_1\circ a_2}(b)} \otimes h_{a_2} w_{\sigma_{a_2}(b)}. \label{dfinal}
\end{equation}
Comparing equation (\ref{dfinal}) with (\ref{RR}) we arrive at ${\cal R}_{13}^F{\cal R}_{12}^F =(\id \otimes \Delta_F){\cal R}^F$. 
And this concludes the second part of our proof that $({\cal A}, \Delta_F, \epsilon, \tilde s, {\cal R}^F)$ is a quasi-triangular Hopf algebra.
    \end{proof}

\begin{remark} Following the proof of Theorem \ref{twist2b} we also conclude:
\begin{enumerate}
\item Assuming $({\cal A}, \Delta, \epsilon,s )$ is a Hopf algebra and requiring  $({\cal A}, \Delta_F, \epsilon)$ to be a bialgebra with coproducts given in Remark \ref{remtw}, we deduce that $(X, \circ)$ is a semigroup. And via Theorem \ref{corf0} we obtain that $(X, +, \circ)$ is a skew brace. Hence, we can define an antipode (\ref{antip}) and $({\cal A}, \Delta_F, \epsilon, \tilde s)$ is a Hopf algebra.

\item Requiring also $({\cal A}, \Delta_F, \epsilon, {\cal R}^F)$ to be a quasi-triangular bialgebra we deduce that $(X, +, \circ)$ is a brace.
\end{enumerate}
\end{remark}
\begin{lemma} \label{corf}
    Let ${\cal A}$ be the special set-theoretic YB algebra and ${\cal F}_{12\ldots n}\in {\cal A}^{\otimes n}$ be the $n$-fold twist (\ref{nfold}) and ${\cal F}_{12\ldots n-1, n}\in {\cal A}^{\otimes n}$ is given in Lemma \ref{nfold0}. Let also $(X, +, \circ)$ be a brace, $\sigma_a,\tau_b: X \to X,$ such that $\sigma_a(b) = -a + a\circ b,$ $\sigma_{\sigma_a(b)}(\tau_b(a)) = a$ and $w_a w_b = w_{a\circ b},$ for all $a,b \in X.$ Then the following statements are true: 
    \begin{enumerate}
   \item ${\cal F}_{12 \ldots n-1,n} = (\Delta^{(n-1)} \otimes \id){\cal F}.$
    \item The $n$-fold twist is given as
    \[{\cal F}_{12\ldots n} = 
    \sum_{a_1, \ldots, a_{n}\in X } h_{a_1} \otimes h_{a_2} w_{a_1}^{-1}\otimes \ldots \otimes h_{a_{n-1}} 
    w_{a_1\circ a_2 \circ \ldots \circ a_{n-2} }^{-1} \otimes w_{a_1\circ a_2 \circ \ldots \circ a_{n-1} }^{-1}.\]

    \end{enumerate}
\end{lemma}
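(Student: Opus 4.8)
The plan is to treat the two parts separately, since part (2) is essentially an immediate rewriting whereas part (1) carries the combinatorial content. For part (2) I would start directly from the explicit expression \eqref{nfold} of the $n$-fold twist established in Lemma \ref{nfold0}, whose $k$-th tensor factor (for $2\le k\le n-1$) is $h_{a_k}w_{a_{k-1}}^{-1}w_{a_{k-2}}^{-1}\cdots w_{a_1}^{-1}$ and whose last factor is $w_{a_{n-1}}^{-1}\cdots w_{a_1}^{-1}$. The only input needed is the hypothesis $w_a w_b=w_{a\circ b}$, which applied iteratively collapses each descending string $w_{a_{k-1}}^{-1}\cdots w_{a_1}^{-1}=(w_{a_1}\cdots w_{a_{k-1}})^{-1}=w_{a_1\circ a_2\circ\cdots\circ a_{k-1}}^{-1}$. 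Substituting these into \eqref{nfold} yields exactly the claimed closed form, so part (2) requires no induction.

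For part (1) I would compute both sides and match them, using the paper's convention that $\Delta^{(n-1)}\colon{\cal A}\to{\cal A}^{\otimes(n-1)}$ (as already fixed by the definition of ${\cal F}_{1,23\ldots n}$ in Lemma \ref{nfold0}). On the right, since $\Delta(h_a)=\sum_{b+c=a}h_b\otimes h_c$ is coassociative, one has $\Delta^{(n-1)}(h_a)=\sum_{b_1+\cdots+b_{n-1}=a}h_{b_1}\otimes\cdots\otimes h_{b_{n-1}}$, whence $(\Delta^{(n-1)}\otimes\id){\cal F}=\sum_{b_1,\ldots,b_{n-1}}h_{b_1}\otimes\cdots\otimes h_{b_{n-1}}\otimes w_{b_1+\cdots+b_{n-1}}^{-1}$. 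On the left I would take the defining expression of ${\cal F}_{12\ldots n-1,n}$ and perform the change of variables $c_k:=\sigma_{a_1}(\sigma_{a_2}(\cdots\sigma_{a_{k-1}}(a_k)\cdots))$ for $1\le k\le n-1$ (with $c_1=a_1$), which is a bijection of $X^{n-1}$ onto itself because each $\sigma_a$ is a bijection.

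The crucial simplification is then to collapse the nested maps. Writing $P_k:=a_1\circ a_2\circ\cdots\circ a_k$ with $P_0=0$, Theorem \ref{corf0}(2a), $\sigma_a(\sigma_b(c))=\sigma_{a\circ b}(c)$, applied repeatedly (using associativity of $\circ$) gives $c_k=\sigma_{P_{k-1}}(a_k)$, and then $\sigma_x(a)=-x+x\circ a$ gives $c_k=-P_{k-1}+P_{k-1}\circ a_k=-P_{k-1}+P_k$. Summing left to right, the adjacent $P$-terms telescope, so $\sum_{k=1}^{n-1}c_k=P_{n-1}$; note this telescoping only relies on the adjacency $-P_{k-1}+P_k$ and needs no commutativity of $+$. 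Simultaneously, $w_a w_b=w_{a\circ b}$ gives $w_{a_{n-1}}^{-1}\cdots w_{a_1}^{-1}=w_{P_{n-1}}^{-1}=w_{c_1+\cdots+c_{n-1}}^{-1}$. Hence, after the change of variables, ${\cal F}_{12\ldots n-1,n}=\sum_{c_1,\ldots,c_{n-1}}h_{c_1}\otimes\cdots\otimes h_{c_{n-1}}\otimes w_{c_1+\cdots+c_{n-1}}^{-1}$, which coincides with $(\Delta^{(n-1)}\otimes\id){\cal F}$ computed above.

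The main obstacle is the bookkeeping in part (1): setting up the triangular substitution $a_k\mapsto c_k$ correctly and recognizing that the brace identities turn the iterated product of $\sigma$'s into the single map $\sigma_{P_{k-1}}$, after which $\sigma_x(a)=-x+x\circ a$ converts the $h$-indices into the telescoping differences $-P_{k-1}+P_k$. Everything else, namely the $w$-string collapse and the final comparison of the two sums, is routine once this identification is in place; as a sanity check I would verify the case $n=3$ against the definition of ${\cal F}_{12,3}$, where $c_1+c_2=a_1\circ a_2$ and $w_{a_2}^{-1}w_{a_1}^{-1}=w_{a_1\circ a_2}^{-1}$ reproduce $(\Delta\otimes\id){\cal F}$.
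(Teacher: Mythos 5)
Your proof is correct. Part (2) coincides with the paper's own argument: collapse the strings $w_{a_{k-1}}^{-1}\cdots w_{a_1}^{-1}=(w_{a_1}\cdots w_{a_{k-1}})^{-1}=w_{a_1\circ\cdots\circ a_{k-1}}^{-1}$ using $w_aw_b=w_{a\circ b}$ and associativity of $\circ$. For part (1), however, you take a more explicit route than the paper. The paper verifies only the case $n=3$, namely ${\cal F}_{12,3}=\sum_{a,b\in X}h_a\otimes h_{\sigma_a(b)}\otimes w^{-1}_{a+\sigma_a(b)}=\sum_{c\in X}\Delta(h_c)\otimes w_c^{-1}=(\Delta\otimes\id){\cal F}$ (the same change of variables $(a,b)\mapsto(a,\sigma_a(b))$ that you use, in embryonic form), and then settles general $n$ with the single phrase ``due to co-associativity.'' You instead verify the general case directly: the triangular substitution $c_k=\sigma_{P_{k-1}}(a_k)$, the identity $\sigma_a(\sigma_b(c))=\sigma_{a\circ b}(c)$ from Theorem \ref{corf0}(2a), and the telescoping $\sum_{k}(-P_{k-1}+P_k)=P_{n-1}$. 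Your appeal to Theorem \ref{corf0}(2a) is legitimate in this setting, since $\sigma_a(b)=-a+a\circ b$ in a brace is automatically additive in $b$, so that theorem's hypotheses hold. What your version buys is that it actually supplies the bookkeeping the paper compresses: to invoke coassociativity one must still match the nested-$\sigma$ indices in the defining formula of ${\cal F}_{12\ldots n-1,n}$ in Lemma \ref{nfold0} against the iterated coproduct $\Delta^{(n-1)}(h_a)=\sum h_{b_1}\otimes\cdots\otimes h_{b_{n-1}}\big|_{b_1+\cdots+b_{n-1}=a}$, which is precisely your computation. What the paper's version buys is brevity and the structural point that the general case is forced by the Hopf-algebra structure once the $n=3$ identity is known. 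Your observation that the telescoping uses only cancellation of adjacent terms, hence no commutativity of $+$, is a correct and worthwhile refinement.
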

\begin{proof} 
\begin{enumerate}
\item Recall the definition of ${\cal F}_{1,23}$ (\ref{deff}), then 
$${\cal F}_{12,3} = \sum_{a,b\in X} h_a \otimes h_{\sigma_{a}(b)} \otimes 
(w_{a +\sigma_a(b)})^{-1} = \sum_{c\in X} \Delta(h_c) \otimes w_c^{-1} = (\Delta \otimes \id){\cal F}$$
and due to co-associativity ${\cal F}_{12 \ldots n-1,n} =(\Delta^{(n-1)} \otimes \id){\cal F}.$

 \item  This is a consequence of the form of the 
    $n$-twist (\ref{nfold}), relation $w_aw_b = w_{a \circ b}$ for all $a,b \in X$ and the associativity in $(X, \circ).$
    \hfill \qedhere
    \end{enumerate}
    \end{proof}

\section{Twisting the \texorpdfstring{$\mathfrak{gl}_n$}{gl\_n} Yangian} \label{3}

\subsection{Preliminaries: a review on the \texorpdfstring{$\mathfrak{gl}_n$}{gl\_n} Yangian}

We first recall the derivation of quantum groups (or quantum algebras) associated with any given solution $R: V \otimes V \to V \otimes V $ of the (parametric) Yang-Baxter equation (YBE) \cite{Baxter, Yang} (in this manuscript {$V = {\mathbb C}^n$})
\begin{equation}
R_{12}(\lambda_1, \lambda_2)R_{13}(\lambda_1, \lambda_3)R_{23}(\lambda_2, \lambda_3)= R_{23}(\lambda_2, \lambda_3) R_{13}(\lambda_1, \lambda_3) R_{12}(\lambda_1,\lambda_2), \label{frt}
\end{equation}
where $\lambda_1, \lambda_2 \in {\mathbb C}$.  Let $R = \sum_ x {\mathrm a}_x \otimes{\mathrm b}_x,$ ${\mathrm a}_x, {\mathrm b}_x \in \mbox{End}({\mathbb C}^n),$ then in the "index notation": \[ R_{12} =   \sum_x {\mathrm a}_x\otimes {\mathrm b}_x \otimes 1_V, \,  R_{23} = 1_{V} \otimes  \sum_x{\mathrm a}_x \otimes{\mathrm b}_x,\,  \text{ and } R_{13} = \sum_x {\mathrm a}_x \otimes 1_V \otimes{\mathrm b}_x.\]

For the derivation of a quantum algebra associated with a given $R$-matrix we employ the FRT (Faddeev-Reshetikhin-Takhtajan) construction \cite{FRT}.
We recall the standard $n \times n$ matrices $e_{x,y},$ with entries $(e_{x,y})_{z,w} = \delta_{x,z} \delta_{y,w},$ $x,y,z,w \in X,$ and recall $X = \{x_1, x_2, \ldots, x_n\}$ (see also Remark \ref{rem0}).
\begin{definition} \label{frt1}
Let $R(\lambda_1, \lambda_2)\in \mbox{End}(V \otimes V)$ be a solution of the Yang-Baxter equation (\ref{frt}), $\lambda_1, \lambda_2\in {\mathbb C},$ $(V ={\mathbb C}^n).$ Let also \[L(\lambda) : = \sum_{x,y\in X} e_{x,y} \otimes L_{x,y}(\lambda) \in  \mbox{End}(V) \otimes {\mathfrak A},\] where $\lambda \in {\mathbb C}$ and $L_{x,y}(\lambda) =\sum_{m=0}^{\infty}\lambda^{-m} L^{(m)}_{x,y}\in {\mathfrak A}.$ The quantum algebra ${\mathfrak A},$ associated to $R,$ is defined as the quotient of the free unital, associative ${\mathbb C}$-algebra, generated by $\Big \{ L^{(m)}_{x,y}| x,y \in X, \ m \in\{0,1, 2,\ldots\}\Big \},$ and relations
\begin{equation}
R_{12}(\lambda_1, \lambda_2)\ L_1(\lambda_1)\ L_2(\lambda_2) = L_2(\lambda_2)\ 
L_1(\lambda_1)\  R_{12}(\lambda_1,  \lambda_2), \label{RTT}
\end{equation}
where $R_{12} =R \otimes  1_{\mathfrak A }$ and 
$L_{1}=\sum_{x,y\in X }e_{x,y}\otimes 1_V \otimes  L_{x,y}$\footnote{Notice that in $L$ in addition to the indices 1 and 2 in (\ref{RTT}) there is also an implicit ``quantum index'' $3$ associated to ${\mathfrak A},$ 
which for now is omitted, i.e. one writes $L_{13}, L_{23}$.}, $L_{2}=\sum_{x,y \in X} 1_V \otimes  e_{x,y}\otimes  L_{x,y}.$
\end{definition}
We note that if equation (\ref{RTT}) holds, then $R$ is a solution of the Yang-Baxter equation (\ref{frt}) (see, e.g., \cite{Majid} for a proof; see also relevant Remark \ref{rem00}). Definition \ref{frt} states that different choices of solutions of the Yang-Baxter equation yield distinct quantum algebras.

\subsection{The Yangian \texorpdfstring{${\cal Y}(\mathfrak{gl}_n)$}{Y(gl\_n)}}

We give a brief review of a special example of a quantum algebra, the $\mathfrak{gl}_n$ Yangian 
${\cal Y}(\mathfrak{gl}_n)$ (or sometimes ${\cal Y}$ in this manuscript for brevity; 
for a more detailed exposition, the interested reader is referred for instance to \cite{Chari, Molev}). 
We consider the FRT point of view (Definition \ref{frt1}). Specifically, in the case of the Yangian, the $R$-matrix is given by $R(\lambda_1, \lambda_2) =  1_{V \otimes V} +(\lambda_1 - \lambda_2)^{-1}{\cal P},$ where ${\cal P} = \sum_{i,j \in X} e_{i,j} \otimes e_{j,i}$ is the permutation operator, such that ${\cal P}(a\otimes b) = b \otimes a,$ $a,b \in V$ and 
\[L(\lambda) = 1 + \sum_{m=1}^{\infty} \lambda^{-m} L^{(m)}, \, L^{(m)} = \sum_{x,y\in X} e_{x,y} \otimes L_{x,y}^{(m)}.\] Then, by the fundamental relation (\ref{RTT}) the algebraic relations among the generators $L_{x,y}^{(m)}$ of the $\mathfrak{gl}_{n}$ Yangian are deduced and are given in the following definition (the interested reader is referred to \cite{Molev} for a more detailed discussion on Yangians).

\begin{definition} Let $X$ be some finite set of cardinality $n\in {\mathbb Z}^+$. The $\mathfrak{gl}_{n}$ Yangian ${\cal Y}(\mathfrak{gl}_{n})$ (or ${\cal Y}$ for brevity) is a unital, associative algebra generated by indeterminates $1_{\cal Y}$ (unit element) and $L_{i,j}^{(m)},$ $i,j \in X,$ $m \in \{0,1,2, \ldots\}$ ($L_{i,j}^{(0)}= \delta_{i,j}1_{\cal Y}$) and relations:
\begin{equation}
\Big [ L_{i,j}^{(p+1)},\ L_{k,l}^{(m)}\Big ] -\Big [ L_{i,j}^{(p)},\ L_{k,l}^{(m+1)}\Big ] = L_{k,j}^{(m)}L_{i,l}^{(p)}- L_{k,j}^{(p)}L_{i,l}^{(m)},  \label{fund2b}
\end{equation}
where $[ \ \,,\ ]: {\cal Y}(\mathfrak{gl}_n)\times {\cal Y}(\mathfrak{gl}_n)\to {\cal Y}(\mathfrak{gl}_n),$ such that $[a, b] = ab - ba,$ for all $a,b \in {\cal Y}.$ 
\end{definition}
Let us focus on the first few explicit exchange relations from (\ref{fund2b})
\begin{enumerate}
\item  $p=0,$ $m=1$ ($L_{i,j}^{(0)} = \delta_{i,j}$):
\begin{equation}
\big [ L_{i,j}^{(1)},\ L_{k,l}^{(1)}\big ] = \delta_{i,l} L_{k,j}^{(1)} -\delta_{k,j} L_{i,l}^{(1)}  \nonumber
\end{equation}
the latter are the familiar ${\mathfrak gl}_{n}$ exchange  relations.

\item  $p=2,$ $m=0$:
\begin{equation}
\big [L_{i,j}^{(2)},\ L_{k,l}^{(1)}\big ] =\delta_{i,l} L_{k,j}^{(2)} - \delta_{k,j} L_{i,l}^{(2)}  \nonumber
\end{equation}

\item  $p=2,$ $m=1$:
\begin{equation}
\big [ L_{i,j}^{(3)},\ L_{k,l}^{(1)}\big ] -\big [ L_{i,j}^{(2)},\ L_{k,l}^{(2)}\big ] =
L_{k,j}^{(1)} L_{i,l}^{(2)} - L_{k,j}^{(2)} L_{i,l}^{(1)} \nonumber
\end{equation}

\item $p=3,$ $m=0$
\begin{equation}
\big [L_{i,j}^{(3)},\ L_{k,l}^{(1)}\big ] =\delta_{i,l} L_{k,j}^{(3)} - \delta_{k,j} L_{i,l}^{(3)}  \nonumber
\end{equation}
\end{enumerate}

\begin{remark} \label{remy}
The Yangian is a quasi-triangular Hopf algebra on ${\mathbb C}$ \cite{Drinfeld} equipped with (recall Definitions \ref{hopf}, \ref{quasi} and Remark \ref{rem00}):
\begin{enumerate}
 \item A co-product $\Delta: {\cal Y}(\mathfrak {gl}_n) \to {\cal Y}(\mathfrak {gl}_n) \otimes {\cal Y}(\mathfrak {gl}_n)$ 
such that
$(\id \otimes \Delta)L(\lambda) =L_{13}(\lambda) L_{12}(\lambda).$
\item A counit $ \epsilon: {\cal Y}(\mathfrak{gl}_n) \to {\mathbb C},$ such that $(\id \otimes \epsilon) L(\lambda) = 1_{V}.$
\item An antipode $s: {\cal Y}(\mathfrak{gl}_n) \to {\cal Y}(\mathfrak{gl}_n):$ $(\id \otimes s)L^{-1}(\lambda) =L(\lambda).$
\end{enumerate}
We recall that $L(\lambda) = \sum_{m=0}^{\infty} \lambda^{-m } L^{(m)}= \sum_{m=0}^{\infty } \sum_{a,b \in X}\lambda^{-m}e_{a,b} \otimes L_{a,b}^{(m)},$ 
then the coproducts of the Yangian generators $L_{a,b}^{(n)}$ are given as ($L^{(0)}_{a,b} = \delta_{a,b}1_{\cal Y}$)
\begin{equation}
\Delta(L^{(m)}_{a,b}) = \sum_{c\in X}\sum_{k=0}^{m} L^{(k)}_{c,b} \otimes L^{(m-k)}_{a,c}
\end{equation}
For instance, the first couple of generators of the Yangian are given for $a,b \in X,$ as
\begin{eqnarray}
\Delta(L_{a,b}^{(1)}) &=& L_{a,b}^{(1)} \otimes 1_{\cal Y} + 1_{\cal Y} \otimes
L_{a,b}^{(1)} \cr
\Delta(L_{a,b}^{(2)}) &=& L_{a,b}^{(2)}
\otimes 1_{\cal Y}+ 1_{\cal Y}\otimes L_{a,b}^{(2)} +
\sum _{c\in X}^n
L_{c,b}^{(1)}\otimes L_{a,c}^{(1)}, \cr
\Delta(L_{a,b}^{(3)}) &=& L_{a,b}^{(3)}
\otimes 1_{\cal Y}+ 1_{\cal Y}\otimes L_{a,b}^{(3)} +
\sum _{c\in X}
L_{c,b}^{(1)}\otimes L_{a,c}^{(2)}  +
\sum _{c\in X}
L_{c,b}^{(2)}\otimes L_{a,c}^{(1)},\  \ldots \label{cop}
\end{eqnarray}
The Yangian as a Hopf algebra is {co-associative}, and the $n$-coproducts can be derived by iteration via 
$\Delta^{(n+1)} = (\id \otimes \Delta^{(n)})\Delta = (\Delta^{(n)} \otimes \id)\Delta.$

Moreover, the counit exists $\epsilon: {\cal Y}(\mathfrak{gl}_{n}) \to {\mathbb C},$ such that 
\[(\epsilon \otimes \id)\Delta(L_{a,b}^{(m)}) = (\id \otimes \epsilon)\Delta(L^{(m)}_{a,b}) = L_{a,b}^{(m)},\]
and hence we obtain by iteration that $\epsilon(L^{(m)}_{a,b}) =0,$ for all $a,b \in X$  and $m \in{\mathbb Z}^+.$ 
The antipode $s: {\cal Y}(\mathfrak{gl}_{n})\to {\cal Y}(\mathfrak{gl}_{n})$ exists, such that 
\[m(s\otimes \id)\Delta(L_{a,b}^{(m)}) =m(\id \otimes s)\Delta(L_{a,b}^{(m)})=\epsilon(L^{(m)}_{a,b})1_{\cal Y}\]
and recalling that $\epsilon(L_{a,b}^{(m)}) =0,$ we obtain the antipode for each generator via:
\begin{equation}
\sum_{c\in X}\sum_{k=0}^{m} s(L^{(k)}_{c,b})  L^{(m-k)}_{a,c}=\sum_{c\in X}\sum_{k=0}^{m} L^{(k)}_{c,b}  s(L^{(m-k)}_{a,c})=0.
\end{equation}
For example, the antipode for the first couple of generators is given as:
\begin{eqnarray}
&& s(L^{(1)}_{a,b}) = - L_{a,b}^{(1)} \nonumber\\
&& s(L^{(2)}_{a,b}) =
 -L^{(2)}_{a,b} +\sum_{c\in X}L^{(1)}_{c,b}L^{(1)}_{a,c},\nonumber\\
 && s(L^{(3)}_{a,b}) =
 -L^{(3)}_{a,b} +\sum_{c\in X}L^{(1)}_{c,b}L^{(2)}_{a,c} + \sum_{c\in X}L^{(2)}_{c,b}L^{(1)}_{a,c} -\sum_{c,d\in X} L^{(1)}_{d,b}L^{(1)}_{c,d}L^{(1)}_{a,c},\ \ldots \end{eqnarray}
\end{remark}

%NOTE: In the fundamental representation, (see Remark) $W_a,\ F,\ R$ are all combinatorial matrices. 

\subsection{Twisting the Yangian}

 Before we present the main findings regarding the twisting of the $\mathfrak{gl}_n$ Yangian we give the definition of the {\it augmented $\mathfrak{gl}_n$ Yangian}.

\begin{definition} Let $X$ be a finite non-empty set and for all $a\in X,$ $\sigma_a, \tau_a: X \to X,$ such that $\sigma_a$ is bijecitve. The augmented $\mathfrak{gl}_n$ Yangian, denoted as ${\cal Y}_n^+,$ is a unital, associative algebra generated by indeterminates $1_{\cal Y},$ $L_{a,b}^{(m)},$ $w^{\pm 1}_a,$ $h_a,$ $a, b \in X,$ $m \in \{0,1,2, \ldots\}$ ($L_{a,b}^{(0)}= \delta_{a,b}1_{\cal Y}$) and relations
\begin{eqnarray}
&& \Big [ L_{a,b}^{(p+1)},\ L_{c,d}^{(m)}\Big ] -\Big [ L_{a,b}^{(p)},\ L_{c,d}^{(m+1)}\Big ] = L_{c,b}^{(m)}L_{a,d}^{(p)}- L_{c,b}^{(p)}L_{a,d}^{(m)},  \nonumber\\
&& h_a  h_b =\delta_{a,b} h_a, ~~ w_a^{-1} w_a =w_aw_a^{-1} =1_{ {\cal Y}}, ~~  w_a w_b = 
w_{\sigma_a(b)} w_{\tau_{b}(a)} ~~  w_a h_b = h_{\sigma_a(b)} w_a, \nonumber\\ 
&& w_a L^{(p)}_{b,c} = L^{(p)}_{\sigma_a(b), \sigma_a(c)} w_a, ~~~h_b L^{(p)}_{a,b} = L^{(p)}_{a,b} h_a,   ~~~h_c L^{(p)}_{a,b} = L^{(p)}_{a,b} h_c= 0~~ \mbox{if} ~~c \neq  a,b. \label{fund2c}
\end{eqnarray}\end{definition}

\begin{theorem} \label{bob} Let ${\cal Y}_n^+$ be the augmented $\mathfrak{gl}_n$ Yangian and $+: X \times X \to X,$ such that $(a,b) \mapsto a+b$. If $(X, +, 0)$ is a group and for all $a, b, x \in X,$ 
\begin{equation}
 \sigma_x(a) + \sigma_x(b) = \sigma_x(a + b), \label{condition0c}
\end{equation} 
then ${\cal Y}_n^+$ is a Hopf algebra, with co-product $\Delta: {\cal Y}_n^+ \to {\cal Y}_n^+ \otimes {\cal Y}^+_n,$ such that
\[\Delta(w_a^{\pm 1}) = 
w_a^{\pm 1}\otimes w_a^{\pm 1},\, \Delta(h_a) = \sum_{b, c \in X} h_b \otimes h_c \big |_{b+c =a}\] and \[\Delta(L^{(m)}_{a,b}) = \sum_{k=1}^m\sum_{c\in X} L_{c,b}^{(k)} \otimes L^{(m-k)}_{a,c}\] for all $a,b \in X$ and $m\in {\mathbb Z}^+.$
\end{theorem}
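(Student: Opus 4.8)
The plan is to verify directly that $({\cal Y}_n^+,\Delta,\epsilon,s)$ satisfies every axiom of Definition \ref{hopf}, taking $\epsilon$ and $s$ to be the natural combinations of the set-theoretic data (Theorem \ref{basica2b}) with the Yangian data (Remark \ref{remy}): $\epsilon(w_a^{\pm1})=1$, $\epsilon(h_a)=\delta_{a,0}$, $\epsilon(L^{(m)}_{a,b})=\delta_{m,0}\delta_{a,b}$, together with $s(w_a^{\pm1})=w_a^{\mp1}$, $s(h_a)=h_{-a}$ and the recursively defined Yangian antipode on the $L^{(m)}_{a,b}$. The central task is to show that $\Delta$ extends from the generators to an algebra homomorphism, i.e. that it preserves every defining relation in (\ref{fund2c}). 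I would split these relations into three groups: the pure Yangian relations, the pure set-theoretic relations, and the mixed relations. The first group is handled exactly as in the standard Yangian (Remark \ref{remy}), since $\Delta$ restricted to the $L^{(m)}_{a,b}$ is the usual Yangian coproduct. The second group is precisely Theorem \ref{basica2b}, whose hypotheses ($(X,+,0)$ a group and (\ref{condition0c})) are assumed here. So the genuinely new content is the compatibility of $\Delta$ with the mixed relations.

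Among these, the relation involving $w_a$ is the easier one. To check $\Delta(w_a)\Delta(L^{(p)}_{b,c})=\Delta(L^{(p)}_{\sigma_a(b),\sigma_a(c)})\Delta(w_a)$ I would use that $w_a$ is group-like, apply the relation $w_aL^{(k)}_{d,e}=L^{(k)}_{\sigma_a(d),\sigma_a(e)}w_a$ to each tensor factor of $(w_a\otimes w_a)\Delta(L^{(p)}_{b,c})$, and then relabel the Yangian summation index $d\mapsto\sigma_a(d)$; bijectivity of $\sigma_a$, assumed in the definition of ${\cal Y}_n^+$, makes the two sides coincide. No additional hypothesis enters here.

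The hard part, and the main obstacle, is the compatibility of $\Delta$ with the relations tying $h_a$ to the Yangian generators, namely $h_bL^{(p)}_{a,b}=L^{(p)}_{a,b}h_a$ and $h_cL^{(p)}_{a,b}=L^{(p)}_{a,b}h_c=0$ for $c\neq a,b$. The difficulty is structural: $\Delta(h_a)=\sum_{b+c=a}h_b\otimes h_c$ spreads an index additively over the group $(X,+)$, whereas $\Delta(L^{(m)}_{a,b})=\sum_{c\in X}\sum_{k}L^{(k)}_{c,b}\otimes L^{(m-k)}_{a,c}$ spreads over the matrix index $c\in X$; these are two different distributions of the same set $X$, and the mixed relation couples them. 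Concretely, I would compute $\Delta(h_b)\Delta(L^{(p)}_{a,b})$ and $\Delta(L^{(p)}_{a,b})\Delta(h_a)$ by first determining, from the defining relations, the homogeneity of each generator $L^{(k)}_{c,b}$ under left and right multiplication by the idempotents $h_s$, and then reducing both products to a common normal form. The crucial step is to show that every term surviving on the left, selected by the constraint $s+t=b$ arising from $\Delta(h_b)$, is matched by exactly one term surviving on the right, selected by $u+v=a$ arising from $\Delta(h_a)$, the matching being realised through the group law together with the annihilation property that $h_s$ kills $L^{(k)}_{c,b}$ unless $s\in\{c,b\}$. I expect essentially all of the work to lie in this index bookkeeping; it is here that the group structure of $(X,+)$ is genuinely exploited, and it is the annihilation relations $h_cL^{(p)}_{a,b}=0$ that must force the two sums to collapse onto matching single terms.

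Finally, the remaining axioms follow once $\Delta$ is known to be a homomorphism. Coassociativity is checked on generators — trivially for the group-like $w_a$, via associativity of $+$ for the $h_a$ (as in Theorem \ref{basica2b}), and via the standard coassociativity of the Yangian coproduct for the $L^{(m)}_{a,b}$ — and then extends to all of ${\cal Y}_n^+$ because $(\id\otimes\Delta)\Delta$ and $(\Delta\otimes\id)\Delta$ are both homomorphisms agreeing on generators. For the counit I would verify that $\epsilon$ respects the relations (immediate for the mixed ones, since $\epsilon(L^{(m)}_{a,b})=0$ for $m\geq1$) and that $(\epsilon\otimes\id)\Delta=(\id\otimes\epsilon)\Delta=\id$ on each generator. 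For the antipode I would check that the combined map $s$ is a well-defined algebra anti-homomorphism — the delicate point again being consistency with the mixed relations, where one uses that conjugation by $w_a$ permutes the $L$-indices through the bijection $\sigma_a$ — and that it satisfies $m(s\otimes\id)\Delta=m(\id\otimes s)\Delta=\epsilon(\cdot)\,1_{\cal Y}$, which on the $L^{(m)}_{a,b}$ reduces to the standard Yangian computation and on $h_a,w_a$ to that of Theorem \ref{basica2b}.
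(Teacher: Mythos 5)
Your overall architecture --- splitting the homomorphism check into pure Yangian relations (delegated to Remark \ref{remy}), pure set-theoretic relations (delegated to Theorem \ref{basica2b}), and the mixed relations done by hand --- is exactly the route of the paper's own proof, which is a three-line sketch citing the same two ingredients and calling the homomorphism check ``straightforward''. Your treatment of the $w$--$L$ relation is correct, and you were also right to silently work with the full Yangian coproduct $\sum_{k=0}^{m}$ of Remark \ref{remy}: with the lower limit $k=1$ as displayed in the statement of Theorem \ref{bob}, the term $1_{\cal Y}\otimes L^{(m)}_{a,b}$ is absent and $\Delta$ would fail already on the pure Yangian relations (test it on $\bigl[L^{(2)}_{i,j},L^{(1)}_{k,l}\bigr]$).

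The gap is in the step you yourself flag as the crux and then defer to ``index bookkeeping'': that bookkeeping does not close. The exchange relation $h_bL^{(p)}_{a,b}=L^{(p)}_{a,b}h_a$ \emph{is} preserved by $\Delta$ (there your matching works: the constraints collapse the middle terms onto the index $0$). What fails are the annihilation relations in (\ref{fund2c}). Take $c\neq a,b$ and $p=1$. Using that $h_u$ annihilates $L^{(1)}_{a,b}$ from the left unless $u=b$, in which case $h_bL^{(1)}_{a,b}=L^{(1)}_{a,b}h_a$, one gets
\begin{equation}
\Delta(h_c)\,\Delta(L^{(1)}_{a,b})=\sum_{u+v=c}\bigl(h_uL^{(1)}_{a,b}\otimes h_v+h_u\otimes h_vL^{(1)}_{a,b}\bigr)=L^{(1)}_{a,b}h_a\otimes h_{v_0}+h_{u_0}\otimes L^{(1)}_{a,b}h_a, \nonumber
\end{equation}
where $b+v_0=c=u_0+b$. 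Since $h_cL^{(1)}_{a,b}=0$ in ${\cal Y}^+_n$, homomorphy would force this expression to vanish; but the two surviving terms sit in different tensor legs, so they cannot cancel each other, and they are nonzero: applying $\rho\otimes\rho$ from Proposition \ref{pror} sends the sum to $e_{b,a}\otimes e_{v_0,v_0}+e_{u_0,u_0}\otimes e_{b,a}\neq 0$. (Reading the relations literally, so that $h_aL^{(1)}_{a,b}$ is not declared zero, only adds two analogous terms and does not help: vanishing of the total, together with $\sum_{x\in X}h_x=1_{\cal Y}$, would force $L^{(1)}_{a,b}=h_aL^{(1)}_{a,b}+h_bL^{(1)}_{a,b}=0$.) So $\Delta$ as defined on generators does not descend to the quotient by the ideal generated by (\ref{fund2c}); no matching of surviving terms, however clever, can exist. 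Repairing this requires changing the defining relations themselves --- e.g.\ replacing the annihilation relations by weight-shift relations of the type $h_cL^{(p)}_{a,b}=L^{(p)}_{a,b}h_{c-b+a}$, which hold in the fundamental representation and (at least for $(X,+)$ abelian) are coproduct-compatible --- a modification that neither your proposal nor the paper's own proof makes. In short: you located the critical point correctly, but the expectation that it resolves by bookkeeping is false as the relations stand, and this is precisely where a complete proof (and, arguably, the theorem's hypotheses) would need to be amended.
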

\begin{proof}
    The proof is based on the fact that ${\cal Y}(\mathfrak{gl}_n)$ is a Hopf algebra (see Remark \ref{remy}) and on Theorem \ref{basica2b}. 
    Co-associativity holds (Theorem \ref{basica2b}) and it is straightforward to show that $\Delta: {\cal Y}^+_n \to {\cal Y}^+_n  \otimes {\cal Y}^+_n $ is an algebra homomorphism. The counits and antipodes of the algebra generators are uniquely defined from the basic axioms of the Hopf algebra (see also Theorem \ref{basica2b} and Remark \ref{remy}).
\end{proof}

\begin{proposition} \label{pror} 
Consider the representation $\rho: {\cal Y}_n^+ \to \mbox{End}({\mathbb C}^n),$ 
such that for all $a,b \in X,$ $m\in {\mathbb Z}^+$
\begin{equation}
\rho(L^{(m)}_{a,b}) = e_{b,a},~~~\rho(w_a) = \sum_{c\in X}e_{\sigma_{a}(c), c}, ~~~\rho(h_a) = e_{a,a}. \label{frep} \end{equation}
Let also the Yangian $R$-matrix, $R(\lambda) \in End({\mathbb C}^{n} \otimes {\mathbb C}^n),$ $R(\lambda) = 1+{\frac{1}{\lambda}}{\cal P},$ 
where $\lambda \in {\mathbb C},$ ${\cal P} =\sum_{a,b \in X}e_{a,b} \otimes e_{b,a}$ is the permutation operator and $L(\lambda) = 1 + \sum_{m=1}^{\infty}\lambda^{-m}L^{(m)},$ where $L^{(m)} = \sum_{a,b \in X} e_{a,b} \otimes L_{a,b}^{(m)},$ $L_{a,b}^{(m)} \in {\cal Y}(\mathfrak{gl}_n).$ Then,

\begin{enumerate} 
\item For all $f \in {\cal Y}_n^+,$
$$\big((\rho \otimes \rho)\Delta^{(op)}(f)\big) R(\lambda) = R(\lambda) \big((\rho \otimes \rho)\Delta(f)\big),$$  $$\big((\rho \otimes \id)\Delta^{(op)}(f)\big) L(\lambda)=  L(\lambda) \big((\rho \otimes \id)\Delta(f)\big).$$
\item Let also ${\cal F} := \sum_{a\in X}h_a \otimes w_a$ and ${\cal F}_{123}, {\cal F}_{12,3}, {\cal F}_{1,23}$ are defined in Theorem \ref{twist2}. Moreover, $F:= (\rho\otimes \rho){\cal F},$ ${\mathrm F} := (\rho\otimes \id) {\cal F},$  $R^F(\lambda) = F^{(op)}R(\lambda)F^{-1},$ $L^F(\lambda) = {\mathrm F}^{(op)}L(\lambda){\mathrm F}^{-1},$ (${\mathrm F}^{(op)} = (\rho \otimes \id){\cal F}^{(op)}$) and ${\mathrm F}_{123}: = (\rho \otimes \rho \otimes \id){\cal F}_{123},$ then
\begin{equation}
R^F_{12}(\lambda_1 -\lambda_2)L^F_1(\lambda_1) L^F_2(\lambda_2) =  
L^F_2(\lambda_2) L^F_1(\lambda_1)R^F_{12}(\lambda_1 -\lambda_2). \label{this1}
\end{equation}
\end{enumerate}
\end{proposition}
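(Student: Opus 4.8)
The plan is to treat the two parts separately: Part 1 I would reduce to a check on the generators of ${\cal Y}_n^+$, and Part 2 I would obtain from the untwisted $RLL$ relation by the standard Drinfel'd twist argument. For Part 1, the key preliminary observation is that, since $\Delta$ and $\Delta^{op}$ are algebra homomorphisms (Theorem \ref{bob}) and $\rho$ is a representation, the maps $f\mapsto(\rho\otimes\rho)\Delta(f)$ and $f\mapsto(\rho\otimes\rho)\Delta^{op}(f)$ are algebra homomorphisms ${\cal Y}_n^+\to\mbox{End}({\mathbb C}^n)^{\otimes 2}$. Hence the identity $(\rho\otimes\rho)\Delta^{op}(f)R(\lambda)=R(\lambda)(\rho\otimes\rho)\Delta(f)$ is multiplicative in $f$ (if it holds for $f$ and $g$ it holds for $fg$), so it suffices to verify it, and the analogous $L$-identity, on the generators $L_{a,b}^{(m)}$, $w_a^{\pm 1}$ and $h_a$. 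On the Yangian generators $L_{a,b}^{(m)}$ both identities are the image of the $RLL$/RTT relation (\ref{RTT}) in the evaluation representation, i.e. the familiar statement that the Yangian $R$-matrix intertwines the fundamental coproduct of $\mathfrak{gl}_n$ with its opposite (Remarks \ref{rem00}, \ref{remy}). On the group-like generators $w_a^{\pm 1}$ one has $\Delta=\Delta^{op}$ and $(\rho\otimes\rho)\Delta(w_a)=\rho(w_a)\otimes\rho(w_a)$, so the first identity reduces to the commutation of $R(\lambda)=1+\lambda^{-1}{\cal P}$ with $\rho(w_a)\otimes\rho(w_a)$, which is immediate since ${\cal P}$ commutes with $g\otimes g$ for every $g$; for the $L$-identity I would use instead the mixed relation $w_aL_{b,c}^{(p)}=L_{\sigma_a(b),\sigma_a(c)}^{(p)}w_a$ of (\ref{fund2c}), the bijectivity of $\sigma_a$, and a relabelling of the summation index.

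The only genuinely new case is that of the $h_a$. Here $\Delta(h_a)=\sum_{b+c=a}h_b\otimes h_c$, and a short index computation shows that the $\lambda^{-1}$-part of the identity holds automatically, since $(\rho\otimes\rho)\Delta^{op}(h_a)\,{\cal P}={\cal P}\,(\rho\otimes\rho)\Delta(h_a)$, while the $\lambda$-independent part reduces to $(\rho\otimes\rho)\Delta^{op}(h_a)=(\rho\otimes\rho)\Delta(h_a)$, i.e. to co-commutativity on the $h$-generators. This is exactly where one needs $(X,+)$ to be abelian. The $L$-identity for $h_a$ is handled in the same manner, now using the mixed relations $h_bL_{a,b}^{(p)}=L_{a,b}^{(p)}h_a$ together with the vanishing relations of (\ref{fund2c}).

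For Part 2 I would start from the untwisted relation $R_{12}(\lambda_1-\lambda_2)L_1(\lambda_1)L_2(\lambda_2)=L_2(\lambda_2)L_1(\lambda_1)R_{12}(\lambda_1-\lambda_2)$, which holds in $\mbox{End}({\mathbb C}^n)^{\otimes 2}\otimes{\cal Y}_n^+$ because the $L_{a,b}^{(m)}$ obey the Yangian relations equivalent to (\ref{RTT}) (see (\ref{rll})). Substituting the definitions $R^F_{12}=F^{(op)}R_{12}F^{-1}$ and $L^F_i={\mathrm F}^{(op)}L_i{\mathrm F}^{-1}$ into (\ref{this1}), the claim becomes the assertion that conjugation by the twists transports the untwisted $RLL$ to the twisted one. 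This is the representation-level form of Proposition \ref{prot}: the admissibility of ${\cal F}$ established in Theorem \ref{twist2} (the cocycle ${\cal F}_{12}{\cal F}_{12,3}={\cal F}_{23}{\cal F}_{1,23}$ and the relations ${\cal F}_{jik}{\cal F}_{ijk}^{-1}={\cal R}^F_{ij}$) provides, after applying $(\rho\otimes\rho\otimes\id)$ with the third slot taken as the quantum (algebra) factor, precisely the identities needed to move each of the three twists $F_{12}$, ${\mathrm F}_{1q}$, ${\mathrm F}_{2q}$ past the $R$'s and $L$'s. Since ${\cal F}=\sum_a h_a\otimes w_a$ is independent of the spectral parameters, all $\lambda$-dependence is carried by $R$ and $L$ and is inert under the twist algebra; equivalently, one may apply Proposition \ref{prot} directly to the Yangian universal ${\cal R}$-matrix and then represent spaces $1,2$ via $\rho_{\lambda_1},\rho_{\lambda_2}$.

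The step I expect to be the main obstacle is the bookkeeping in Part 2: the factors $L^F_1$ and $L^F_2$ share the quantum space, so the middle product ${\mathrm F}^{-1}_{1q}{\mathrm F}^{(op)}_{2q}$ must be rearranged by the correctly permuted three-space cocycle identity, in exact analogy with the role of the permuted twists ${\cal F}_{jik}$, ${\cal F}_{ikj}$ in the proof of Proposition \ref{prot}; keeping track of which copy of ${\cal F}$ acts in which pair among the two auxiliary spaces and the quantum space is the delicate part. A secondary point, already isolated above, is that the abelian hypothesis on $(X,+)$ is indispensable for the $h$-generators in Part 1, since otherwise $\Delta^{op}\neq\Delta$ on the $h_a$ and the $\lambda$-independent part of the intertwining identity fails.
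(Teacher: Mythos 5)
Your overall route is the same as the paper's: part 1 is established by a generator\--by\--generator check using the relations (\ref{fund2c}) and the coproducts of Theorem \ref{bob}, and part 2 is the mixed Drinfel'd--twist argument --- multiply the Yangian $RLL$ relation on the left by ${\mathrm F}_{321}=(\rho\otimes\rho\otimes\id){\cal F}_{321}$, transport the twists through $R_{12}$, $L_{13}$, $L_{23}$ via part 1 and the represented identities of Theorem \ref{twist2}, and cancel the invertible ${\mathrm F}_{123}$ --- which is precisely the paper's proof of (\ref{this1}). Your handling of $w_a^{\pm 1}$ and $h_a$ is sound, and your observation that the $h_a$ case forces $(X,+)$ to be abelian (cocommutativity of the represented coproduct) is a genuine point that the paper's statement and one-line proof pass over in silence.

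The gap is in your disposal of the Yangian generators $L^{(m)}_{a,b}$, and it is exposed by your own method. Since $\rho$ in (\ref{frep}) carries no spectral parameter while $R(\lambda)=1+\lambda^{-1}{\cal P}$ and $L(\lambda)$ do, matching powers of $\lambda$ (exactly as you do for $h_a$) reduces the first identity to $(\rho\otimes\rho)\Delta^{op}(f)=(\rho\otimes\rho)\Delta(f)$ together with the automatic ${\cal P}$-conjugation relation. For $f=L^{(2)}_{a,b}$ this requires $\sum_{c}e_{b,c}\otimes e_{c,a}=\sum_{c}e_{c,a}\otimes e_{b,c}$, which is false (test both sides on $e_2\otimes e_1$ with $a=b=1$); the $\lambda^0$ part of the $L$-identity fails in the same way. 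The ``familiar statement'' you invoke ties the representation parameters to the argument of $R$, resp.\ $L$: it reads $(\rho_{\lambda_1}\otimes\rho_{\lambda_2})\Delta^{op}(f)\,R(\lambda_1-\lambda_2)=R(\lambda_1-\lambda_2)\,(\rho_{\lambda_1}\otimes\rho_{\lambda_2})\Delta(f)$ and $(\rho_{\lambda}\otimes\id)\Delta^{op}(f)\,L(\lambda)=L(\lambda)\,(\rho_{\lambda}\otimes\id)\Delta(f)$, where $\rho_\mu(L^{(m)}_{a,b})=\mu^{m-1}e_{b,a}$, so that (\ref{frep}) is the evaluation representation frozen at $\mu=1$. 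Decoupling the parameters, as both the proposition's wording and your proof do, makes the claim fail for all $m\geq 2$. To be fair, this defect is inherited from the statement itself, and the paper's proof of part 1 does not address it either; but a correct write-up must restore the spectral parameters in the representation (one cannot instead restrict the check to $L^{(1)}_{a,b}$, $w_a^{\pm1}$, $h_a$, since these do not generate ${\cal Y}_n^+$), and your part 2, which feeds part 1 into the twist argument, is only as solid as that step.
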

\begin{proof} 
\begin{enumerate}
\item The proof is based on the algebraic relations of ${\cal Y}_n^+$   
and the expressions of the co-products of the algebra generators are given in Theorem \ref{bob}.
\item In the proof of the second part we use part one of the proposition as well as Theorem \ref{twist2}. Specifically, we start from equation (\ref{RTT}) for the Yangian and as in the proof of Theorem \ref{twist2} and using the fundamental representation (\ref{frep}):
\begin{eqnarray}
&& {\mathrm F}_{321} R_{12}(\lambda_1 -\lambda_2) L_{13}(\lambda_1) L_{23}(\lambda_2) ={\mathrm F}_{321} L_{23}(\lambda_2) L_{13}(\lambda_1) R_{12}(\lambda_1 -\lambda_2) \ \Rightarrow\ \nonumber \\ &&  R^F_{12}(\lambda_1 -\lambda_2) L^F_{13}(\lambda_1) L^F_{23}(\lambda_2) {\mathrm F}_{123}=  L^F_{23}(\lambda_2) L^F_{13}(\lambda_1) R^F_{12}(\lambda_1 -\lambda_2) {\mathrm F}_{123}, \nonumber \end{eqnarray} 
which leads to (\ref{this1}), due to that fact that ${\mathrm F}_{123}$ is invertible. \hfill \qedhere
\end{enumerate}
    \end{proof}

\begin{remark}
According to Proposition \ref{pror} $R^{F} = r + {\frac{1}{\lambda}} {\cal P},$ where ${\cal P} = \sum_{a,b \in X} e_{a,b} \otimes e_{b,a} $  and $r = \sum_{a,b \in X} e_{b, \sigma_a(b)} \otimes e_{a, \tau_b(a)}.$ Moreover,
$L^{F}(\lambda) = L^{'(0)} + \sum_{m=1}^{\infty} \lambda^{-m}  L^{'(m)},$ where
\begin{equation}
L^{'(0)} = \sum_{a,b\in X} e_{b, \sigma_a(b)} \otimes h_a w_{\sigma_a(b)}, ~~~~ L^{'(m)} = \sum_{a,b,c \in X} e_{a,b} \otimes h_c L^{(m)}_{\sigma_c(a),b}w_b.  \nonumber
\end{equation}
Moreover, the twisted coproducts for the augmented $\mathfrak{gl}_n$ Yangian are $\Delta_F(x) = {\cal F} \Delta(x) {\cal F}^{-1},$  $x \in {\cal Y}^+_n.$
Recall first the twisted coproducts of the special set-theoretic YB algebra, for $a\in X,$ $(X,+)$ is a group, \[ \Delta_F(w_a) = \sum_{b \in X}  w_ah_b \otimes w _{\tau_b(a)}, \,
\Delta_F(h_a) = \sum_{b \in X} h_b \otimes  h_c \big |_{b + \sigma_b(c) =a}\] (recall Remark \ref{remtw}). Also, for $a, b \in X,$ \[\Delta_F(L^{(m)}_{a,b}) = \sum_{k=1}^m \sum_{c \in X} L^{(k)}_{c,b} h_c \otimes w_b^{-1} L^{(m-k)}_{a,c}w_c.\]
\end{remark}

\subsubsection*{Acknowledgments}
Support from the EPSRC research grant EP/V008129/1 is acknowledged.

%%% REFERENCES %%%

\EditInfo{May 29, 2025}{September 8, 2025}{Ivan Kaygorodov and David Towers}

\end{document}